\newtheorem{teo}{Theorem}[section]
\newtheorem{lem}[teo]{Lemma}
\newtheorem{prop}[teo]{Proposition}
\newtheorem{oss}[teo]{Remark}
\theoremstyle{definition}
\newtheorem{defi}[teo]{Definition}
\def\cleardoublepage{\clearpage\if@twoside \ifodd\c@page\else
\hbox{}
\thispagestyle{empty}
\newpage
\if@twocolumn\hbox{}\newpage\fi\fi\fi}
\title{H-convergence for equations depending on monotone operators in Carnot groups}
\author{A. Maione}
\address{Alberto Maione: Dipartimento di Matematica\\Universit\`a di Trento\\ Via Sommarive 14\\ 38123, Povo (Trento) - Italy\\}
\email{alberto.maione@unitn.it}
\thanks{A.M is supported by MIUR, Italy, GNAMPA of INDAM and University of Trento, Italy.}
\date{\today}
\begin{document}


\begin{abstract}
The aim of this paper is to present some results, related to the convergence of solutions of Dirichlet problems for sequences of monotone operators, that generalize well-known results of Murat-Tartar, De Arcangelis-Serra Cassano and Baldi-Franchi-Tchou-Tesi to the sub-Riemannian framework of Carnot groups.
\end{abstract}
\maketitle


\section{Introduction}
The \textit{$H$-convergence} was coined by Fran\c{c}ois Murat and Luc Tartar in the 70's and it addresses differential operators. In \cite{T,T2}, Tartar reports applications of the $H$-convergence to many different frameworks covering, among other things, the case involving monotone operators (see Definition \ref{def}) of the form
\begin{align*}
\mathcal{A}(u):=-\mathrm{div}(A(x,\nabla u)),
\end{align*}
where $A$ is a Carath\'eodory function satisfying uniformly ellipticity and continuous conditions, in the setting of Hilbert spaces \cite[Chapter 11]{T}.

In the last few years, this theory was developed by many different authors, as well as its applications to homogenization problems (see e.g. \cite{BMT,BMT2,BPT1,BPT2,BT,BCPDF,CPDF,CPDMDF,DASC2,DF, DGS,FMT,M,MT,SC}). In particular, in \cite{DASC}, De Arcangelis and Serra Cassano provided an extension of the original Murat and Tartar's $H$-compactness result related to monotone operators \cite[Theorem 11.2]{T}, working with a class of operators depending on weights, in the general setting of Banach spaces.
\medskip


As for many other related topics in literature, it would be interesting to investigate for this kind of results in non--Euclidean settings. In this paper, we look for extensions to Carnot groups, that are connected, simply connected and nilpotent Lie groups, whose Lie algebra is stratified (see Section \ref{sect2} for details). These spaces are an example of
sub-Riemannian manifolds and they have become an environment of particular interest for analysis and PDEs over the last decades, see e.g. \cite{CMSV,MBP}.

A linear counterpart of the study of the $H$-convergence for monotone operators in Carnot groups, was faced up by Baldi, Franchi, Tchou and Tesi in \cite{BFT,FTT,BFTT}. The willing of adapting the original techniques in this new framework, needed a generalization of the Murat and Tartar's \textit{Div-curl Lemma} \cite[Lemma 7.2]{T}, which is a classic key tool in this theory. Therefore, after defining a suitable notion of \textit{intrinsic curl} for Carnot groups, and stating a version of the Div-curl Lemma compatible with this setting \cite[Theorem 5.1]{BFTT}, the authors were able to provide an extension of the $H$-compactness theorem for the linear case of \textit{matrix-valued measurable functions}, i.e., studying, in Hilbert spaces, operators of the form
$$\mathcal{A}(u):=-\mathrm{div}_\mathbb{G}(A(x)\nabla_\mathbb{G}u),$$
where $A$ is a ($m\times m$)-matrix-valued measurable function for $m\leq n$, dimension of the first layer of the Lie algebra associated with $\mathbb{G}$ (see Definition  \ref{Carnot} below), and $\nabla_{\mathbb{G}}$ and $\mathrm{div}_\mathbb{G}$ are the intrinsic gradient and the intrinsic divergence, respectively (see Definition \ref{def_curl_nabla} for details).
\medskip

Motivated by the previous results, in this paper we provide a new $H$-compactness theorem for monotone operators in Carnot groups, in the nonlinear case and in the general setting of Banach spaces, that is, working with operators of the form
\begin{align}\label{operator}
\mathcal{A}(u):=-\mathrm{div}_{\mathbb{G}}(A(\cdot,\nabla_{\mathbb{G}}u))\ \text{in}\ \Omega\subset\mathbb{G}\ \text{open}
\end{align}
for a given $A\in \mathcal{M}(\alpha,\beta;\Omega)$. The class $\mathcal{M}(\alpha,\beta;\Omega)$ is defined as follows
\begin{defi}\label{M,alpha,beta}
Let $\Omega\subset\mathbb{G}$ be open, $2\leq p<\infty$ and let $\alpha\leq\beta$ be positive constants. We define $\mathcal{M}(\alpha,\beta;\Omega)$ the class of Carath\'eodory functions $A: \Omega\times \mathbb{R}^{m}\to \mathbb{R}^{m}$ such that
\begin{itemize}
	\item [$(i)$] $A(x,0)=0$;
	\item [$(ii)$] $\left\langle A(x,\xi)-A(x,\eta), \xi-\eta\right\rangle\geq \alpha |\xi-\eta|^p$;
	\item [$(iii)$] $|A(x,\xi)-A(x,\eta)|\leq\beta\left[1+|\xi|^p+|\eta|^p\right]^\frac{p-2}{p}|\xi-\eta|$
\end{itemize}
for every $\xi,\eta\in \mathbb{R}^{m}$ and a.e. $x\in\Omega$.
\end{defi}
The structure of the paper is the following one: in Section \ref{sect2}, are stated the definitions of Carnot groups and the functional setting, needed throughout the paper.
In Section \ref{sect3}, we study the main properties of the class of monotone operators we are interested in, and finally, in Section \ref{sect4}, after defining a proper notion of $H$-convergence (see Definition \ref{Hconv} below), we provide the main result of the paper, namely
\begin{teo}\label{Xth}
Let $\Omega\subset\mathbb{G}$ be open, connected and bounded, $2\leq p<\infty$, $\alpha\leq\beta$ positive constants and let $(A^n)_n\subset\mathcal{M}(\alpha,\beta;\Omega)$. Then, up to subsequences, there exists $A^{eff}\in\mathcal{M}(\alpha,\beta;\Omega)$ such that
$$A^n\ \text{$H$-converges to}\ A^{eff}.$$
\end{teo}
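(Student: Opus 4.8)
The plan is to reproduce the Murat--Tartar $H$-compactness scheme in the monotone form of De Arcangelis--Serra Cassano \cite{DASC}, with the intrinsic Div--curl Lemma \cite[Theorem 5.1]{BFTT} and Minty's monotonicity argument as the two engines. I would start from the a priori estimates. For each datum $f$ in the dual of $W^{1,p}_{\mathbb{G},0}(\Omega)$ the problem $\mathcal{A}^n(u^n)=f$, $u^n\in W^{1,p}_{\mathbb{G},0}(\Omega)$, is uniquely solvable by the Browder--Minty theory (coercivity from $(ii)$ together with $A^n(x,0)=0$, boundedness and hemicontinuity from $(iii)$; this is the content of Section \ref{sect3}). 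Coercivity bounds $\nabla_{\mathbb{G}}u^n$ uniformly in $L^p(\Omega;\mathbb{R}^m)$, and then $(iii)$ bounds the fluxes $\sigma^n:=A^n(\cdot,\nabla_{\mathbb{G}}u^n)$ uniformly in $L^{p'}(\Omega;\mathbb{R}^m)$. Since $2\le p<\infty$ both spaces are reflexive, so, fixing a countable dense family of data and extracting diagonally, I obtain a single subsequence along which $u^n\rightharpoonup u$, $\nabla_{\mathbb{G}}u^n\rightharpoonup\nabla_{\mathbb{G}}u$ in $L^p$ and $\sigma^n\rightharpoonup\sigma$ in $L^{p'}$ for every datum of the family.

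The delicate stage is to produce the effective integrand $A^{eff}$. Here I would follow the localization/integral-representation technique of \cite{CPDMDF}, adapted to the group: for a countable dense set of slopes $\{\xi_k\}\subset\mathbb{R}^m$ and balls $B_\rho(x_0)\Subset\Omega$, I study the homogeneous corrector problems with the first-layer linear datum $\ell_{\xi_k}$ (the function with constant intrinsic gradient $\xi_k$), and define $A^{eff}(x_0,\xi_k)$ through a coupled limit ($n\to\infty$, then $\rho\to0$) of the averaged fluxes. The tension here --- a divergence-compact flux forces solving the PDE, while a weak-limit gradient equal to the constant $\xi_k$ forces the affine profile --- is exactly what makes this step nontrivial, and the intrinsic Div--curl Lemma is what reconciles the two and makes the flux limits well defined and compatible with averaging. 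The uniform bound $(iii)$, which passes to the limit, renders $\xi\mapsto A^{eff}(x_0,\xi)$ continuous, so the definition extends from $\{\xi_k\}$ to all of $\mathbb{R}^m$ and yields a Carath\'eodory map with $A^{eff}(x,0)=0$.

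With $A^{eff}$ in hand I would identify the limit flux of the generic problem and check the structure in one stroke, via Minty's trick. For a slope $\xi$, let $w^n_\xi$ be the correctors with $\nabla_{\mathbb{G}}w^n_\xi\rightharpoonup\xi$ and $A^n(\cdot,\nabla_{\mathbb{G}}w^n_\xi)\rightharpoonup A^{eff}(\cdot,\xi)$ whose intrinsic-flux divergences stay precompact (available once $A^{eff}$ is known from the previous step). Since $\mathrm{div}_{\mathbb{G}}\sigma^n=-f$ is fixed and $\nabla_{\mathbb{G}}u^n$, $\nabla_{\mathbb{G}}w^n_\xi$ are intrinsic gradients --- hence curl-free in the sense of Definition \ref{def_curl_nabla} --- the Div--curl Lemma passes to the limit in the products $\langle\sigma^n,\nabla_{\mathbb{G}}u^n\rangle$, $\langle\sigma^n,\nabla_{\mathbb{G}}w^n_\xi\rangle$, $\langle A^n(\cdot,\nabla_{\mathbb{G}}w^n_\xi),\nabla_{\mathbb{G}}u^n\rangle$ and $\langle A^n(\cdot,\nabla_{\mathbb{G}}w^n_\xi),\nabla_{\mathbb{G}}w^n_\xi\rangle$. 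Expanding the nonnegative quantity $\langle\sigma^n-A^n(\cdot,\nabla_{\mathbb{G}}w^n_\xi),\,\nabla_{\mathbb{G}}u^n-\nabla_{\mathbb{G}}w^n_\xi\rangle\ge0$ from $(ii)$, testing against nonnegative $\varphi\in C^\infty_c(\Omega)$, and letting $n\to\infty$ yields
\[
\langle\,\sigma-A^{eff}(\cdot,\xi),\ \nabla_{\mathbb{G}}u-\xi\,\rangle\ge0\qquad\text{a.e. in }\Omega,\ \forall\,\xi\in\mathbb{R}^m.
\]
Minty's lemma (put $\xi=\nabla_{\mathbb{G}}u(x)-t\eta$, divide by $t>0$, let $t\to0^+$, use continuity of $A^{eff}(x,\cdot)$) then forces $\sigma=A^{eff}(\cdot,\nabla_{\mathbb{G}}u)$; together with $-\mathrm{div}_{\mathbb{G}}\sigma=f$ this is exactly the $H$-convergence of Definition \ref{Hconv}. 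The same Div--curl computation on pairs of correctors, combined with weak lower semicontinuity of $v\mapsto\int|v|^p\varphi$, returns $(ii)$ for $A^{eff}$ with the same $\alpha$, while $(iii)$ with the same $\beta$ follows by transferring the pointwise bound of Definition \ref{M,alpha,beta}$(iii)$ to the limit, controlled by the monotonicity just obtained.

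The genuine obstacle is the correct use of the intrinsic Div--curl Lemma and, behind it, the non-integrability of the horizontal distribution of $\mathbb{G}$: ``curl-free'' is no longer the classical condition but must be read through the intrinsic differential complex of Definition \ref{def_curl_nabla}, and one must verify that corrector gradients and fluxes fall precisely under the compactness hypotheses of \cite[Theorem 5.1]{BFTT}. This is the point at which the sub-Riemannian geometry enters and the Euclidean argument of \cite{DASC} does not transcribe verbatim, in particular in the construction stage where the correctors reconciling a prescribed constant limit-gradient with a divergence-compact flux are produced and the $x$-dependence of $A^{eff}$ is pinned down. A secondary, more technical, difficulty is ensuring $A^{eff}$ keeps the \emph{same} constants $\alpha,\beta$ --- especially the nonlinear $p>2$ bound $(iii)$ --- rather than worse ones, which requires handling the growth in the weak limit with the care already present in \cite{DASC,CPDMDF}.
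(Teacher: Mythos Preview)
Your proposal is correct and shares the paper's overall architecture: uniform a~priori bounds, diagonal extraction over a countable dense set of data, corrector sequences with prescribed affine limit slopes, and the intrinsic Div--curl Lemma of \cite{BFTT} to pass limits in the mixed products. Two technical choices differ. For the construction of $A^{eff}$, the paper does not localize via averaged fluxes on shrinking balls; instead it first packages the weak limits into abstract operators $\mathcal{A}_\infty^{-1}:V^*\to V$ (Lemma~\ref{lemma1}) and $M:V^*\to L^{p'}(\Omega,H\mathbb{G})$ (Lemma~\ref{lemma2}), sets $C:=M\circ\mathcal{A}_\infty$, and defines $A^{eff}(x,\xi):=C(v_{\xi,\omega})(x)$ on each $\omega\Subset\Omega$, where $v_{\xi,\omega}\in W^{1,p}_{\mathbb{G},0}(\Omega)$ is a fixed cutoff of the first-layer affine function $\langle\xi,\pi(x)\rangle$; the Div--curl Lemma then verifies that this is independent of $\omega$ and yields $(ii)$--$(iii)$ pointwise with the same $\alpha,\beta$. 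For the identification of the generic limit flux, the paper does not run Minty's linearization $\xi=\nabla_\mathbb{G}u-t\eta$; it instead extracts from the same Div--curl computation the Lipschitz-type bound $|C(u_\infty)-A^{eff}(x,\xi)|\le\beta[1+|\nabla_\mathbb{G}u_\infty|^p+|\xi|^p]^{(p-2)/p}|\nabla_\mathbb{G}u_\infty-\xi|$ and concludes by letting $\xi\to\nabla_\mathbb{G}u_\infty(x)$. Your route is viable, but the paper's operator-level definition of $A^{eff}$ sidesteps the $\rho\to0$ limit and the attendant measurability check, and its identification step reuses estimate~$(iii)$ rather than introducing an additional hemicontinuity argument.
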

We would like to stress that Theorem \ref{Xth} generalizes:
\begin{itemize}
    \item \cite[Theorem 11.2]{T}, for $\mathbb{G}=\mathbb{R}^n$ and $p=2$;
    \item \cite[Theorem 4.4]{FTT}, for $\mathbb{G}=\mathbb{H}$, $p=2$ and $\mathcal{A}(u)=-\mathrm{div}_\mathbb{H}(A(x)\nabla_\mathbb{H}u)$;
    \item \cite[Theorem 6.4]{BFT}, for $\mathbb{G}=\mathbb{H}^n$, $p=2$ and $\mathcal{A}(u)=-\mathrm{div}_\mathbb{H}(A(x)\nabla_\mathbb{H}u)$;
    \item \cite[Theorem 5.4]{BFTT}, for $p=2$ and $\mathcal{A}(u)=-\mathrm{div}_\mathbb{G}(A(x)\nabla_\mathbb{G}u)$.
\end{itemize}


\section{Preliminaries}\label{sect2}
\subsection{Carnot groups}
Let us recall just few definitions concerning Carnot groups. For a deeper study in this topic, the interested reader is referred to \cite{BLU}.
\begin{defi}\label{Carnot}
A Carnot group $\mathbb{G}$ of step $k$ is a connected, simply connected and nilpotent Lie group, whose Lie algebra $\mathfrak{g}$ admits a step $k$ stratification, that is, there exist $V_1,..,V_k$ linear subspaces of $\mathfrak{g}$, usually called layers, such that
	\begin{itemize}
		\item[$(i)$] $\mathfrak{g}=V_1\oplus..\oplus V_k$;
		\item[$(ii)$] $[V_1,V_i]=V_{i+1}$ for any $i<k$, where $[V_1,V_i]$ is the sub--algebra of $\mathfrak{g}$ generated by the commutation $[X,Y]$ of $X\in V_1,Y\in V_i$;
		\item[$(iii)$] $V_k\neq\{0\}$ and $V_i=\{0\}$ for all $i>k$, where $0$ is the identity element of $\mathfrak{g}$.
	\end{itemize}
\end{defi}
The simplest examples of Carnot groups are the Euclidean spaces, that are the only \textit{abelian} Carnot groups. Other typical examples of Carnot groups are the Heisenberg groups and the upper triangular groups.

It is clear from Definition \ref{Carnot}, that the first layer $V_1$ plays the role of generator of the algebra $\mathfrak{g}$, by commutation. For this reason, we will call $V_1$ the \textit{horizontal layer} and the other layers $V_i$, for $1<i\leq k$, the \textit{vertical layers}.
\medskip

We can define two different dimensions on $\mathbb{G}$: the \textit{topological dimension}, which is its dimension as Lie group, i.e.,
\begin{align*}
	{\rm dim}(\mathbb{G})={\rm dim}(\mathfrak{g})=\sum_{i=1}^km_i
\end{align*}
where $m_i:={\rm dim}(V_i)$ for any $i$, and the \textit{homogeneous dimension}, defined as
	$$Q:=\sum_{i=1}^ki\ m_i.$$
Let us notice that, when $\mathbb{G}$ is not $\mathbb{R}^n$, the homogeneous dimension of $\mathbb{G}$ is always bigger than the topological one.

From now on, we denote the dimension of the horizontal layer by $m$, instead of $m_1$.

\subsection{The functional setting}
Through the paper, $(X_1,..,X_m)$ denotes a basis of the horizontal layer $V_1$, $|\Omega|$ the Lebesgue measure of any set $\Omega\subset\mathbb{G}$ and, if $\xi,\eta\in\mathbb{R}^{m}$, we denote by $|\xi|$ and $\langle\xi,\eta\rangle$ the Euclidean norm and the scalar product, respectively.
The subbundle of the tangent bundle $T\mathbb{G}$, which is spanned by the vector fields $X_1,..,X_m$, is called the {\it horizontal bundle} and it is denoted by $H\mathbb{G}$. The sections of $H\mathbb{G}$ are called {\it horizontal sections}.
\begin{defi}\label{def_curl_nabla}
Let $(X_1,..,X_{m})$ be a basis of $V_1$, let $u:\mathbb{G}\rightarrow\mathbb{R}$ for which the partial derivatives $X_iu$ exist and let $\Phi$ a horizontal section such that $X_i\Phi_i\in L_{\rm loc}^1(\mathbb{G})$ for $i=1,..,m$. The \textit{intrinsic gradient} of $u$ and the \textit{intrinsic divergence} of $\Phi$ are defined, respectively, as
\begin{itemize}
    \item[$\cdot$] $\nabla_\mathbb{G}u:=\sum_{j=1}^m(X_j u)X_j=(X_1u,..,X_{m}u)$;
    \item[$\cdot$] ${\rm div}_\mathbb{G}(\Phi):=\sum_{i=1}^{m}X_i\Phi_i$.
\end{itemize}
\end{defi}
A definition of \textit{intrinsic curl} in the setting of Carnot groups, ${\rm curl}_\mathbb{G}$, can be found in \cite[Section 5]{BFTT}.
\begin{defi}\label{Definition 1.1.1} For any $1\leq p<\infty$ we set
\begin{itemize}
	\item[$\cdot$] $W^{1,p}_{\mathbb{G}}(\Omega):=\left\{u\in L^p(\Omega): X_j u\in L^ p(\Omega)\ {\hbox{\rm for }} j=1,\dots,m\right\}$\\
	endowed with its natural norm;
	\item[$\cdot$] $W^{1,p}_{\mathbb{G},0}(\Omega)$ will denote the closure of $C^\infty_c(\Omega)\cap W^{1,p}_{\mathbb{G}}(\Omega)$ in $W^{1,p}_{\mathbb{G}}(\Omega)$. If $\Omega$ is bounded, we can take $$\|u\|_{W^{1,p}_{\mathbb{G},0}(\Omega)}^p:=\int_{\Omega}|\nabla_{\mathbb{G}}u|^p\, dx$$
	for every $u\in W^{1,p}_{\mathbb{G},0}(\Omega)$, by the Poincar\'e inequality (see for instance \cite{MPSC2} and, for more details, \cite{MPSC,MV});
	\item[$\cdot$] $W^{-1,p'}_{\mathbb{G}}(\Omega):=(W^{1,p}_{\mathbb{G},0}(\Omega))'$;
	\item[$\cdot$] $L^p({\Omega,H\mathbb{G}})$ will denote the set of all measurable sections $\Phi\in L^p(\Omega)^m$.
\end{itemize}
\end{defi}
\begin{prop}{\cite[Corollary 4.14]{F}}
If $1<p<\infty$, then the space $W^{1,p}_\mathbb{G}(\Omega)$ is independent of the choice of $(X_1,..,X_{m})$.
\end{prop}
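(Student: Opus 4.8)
The plan is to reduce the statement to the elementary observation that any two bases of the \emph{fixed} horizontal layer $V_1$ differ by a constant invertible matrix, and that weak horizontal differentiation is linear under such constant-coefficient changes. First I would fix two bases $(X_1,\dots,X_m)$ and $(Y_1,\dots,Y_m)$ of $V_1$. Since $V_1$ is a finite-dimensional vector space, there is an invertible matrix $C=(c_{ij})\in GL(m,\mathbb{R})$ with constant entries such that $Y_i=\sum_{j=1}^m c_{ij}X_j$ for every $i$. The point to stress is that the $c_{ij}$ are genuine constants, not functions on $\mathbb{G}$: this is the only place where the linear structure of $V_1$ enters, and it is the crux of the whole argument.

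Next I would verify that weak differentiation along horizontal vector fields is linear over $\mathbb{R}$. Writing each left-invariant field in coordinates as $X_j=\sum_\ell a_{j\ell}\partial_{x_\ell}$ with polynomial coefficients $a_{j\ell}$, the weak derivative $X_j u=g_j$ is characterized by $\int_\Omega u\, X_j^*\phi\,dx=\int_\Omega g_j\phi\,dx$ for all $\phi\in C_c^\infty(\Omega)$, where $X_j^*\phi=-\sum_\ell\partial_{x_\ell}(a_{j\ell}\phi)$ is the formal adjoint. Because $Y_i=\sum_j c_{ij}X_j$ has the constants $c_{ij}$ pulled outside each derivative, a direct computation gives $Y_i^*=\sum_j c_{ij}X_j^*$; the zeroth-order terms produced by the product rule are already absorbed inside each $X_j^*$ and assemble with the $c_{ij}$ without interference. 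Testing against $\phi$ then yields, for any $u\in L^p(\Omega)$ with $X_j u\in L^p(\Omega)$, the identity $Y_i u=\sum_{j=1}^m c_{ij}X_j u$ in the weak sense.

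With this identity the conclusion is immediate. If $u\in W^{1,p}_\mathbb{G}(\Omega)$ relative to $(X_j)$, then each $Y_i u=\sum_j c_{ij}X_j u$ is a finite linear combination of $L^p(\Omega)$ functions, hence lies in $L^p(\Omega)$, so $u$ belongs to the corresponding space relative to $(Y_i)$. Since $C$ is invertible, writing $X_j=\sum_i (C^{-1})_{ji}Y_i$ gives the reverse inclusion, and the two spaces coincide as sets. For the topological content I would observe that the horizontal gradients are related pointwise by $(Y_1u,\dots,Y_mu)^{\!\top}=C\,(X_1u,\dots,X_mu)^{\!\top}$, whence $|\nabla_\mathbb{G}^{Y}u|\le\|C\|\,|\nabla_\mathbb{G}^{X}u|$ and, symmetrically, $|\nabla_\mathbb{G}^{X}u|\le\|C^{-1}\|\,|\nabla_\mathbb{G}^{Y}u|$ almost everywhere; integrating the $p$-th powers shows that the two natural norms are equivalent, so the identification holds as normed spaces.

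I expect the only genuinely delicate point to be the second step: checking that the weak derivative $Y_i u$ equals $\sum_j c_{ij}X_j u$ with no spurious lower-order contribution. This is precisely where the constancy of $C$ is essential, as a variable change of frame would introduce extra terms through the product rule and the divergence of the coefficients, breaking the clean linearity. The hypothesis $1<p<\infty$ is not needed for the set-equality and norm-equivalence argument above, which works for all $1\le p<\infty$; I read it as inherited from the cited statement in \cite{F}, where basis-independence is packaged together with the finer Meyers--Serrin-type identification of $W^{1,p}_\mathbb{G}(\Omega)$ with the closure of smooth functions, for which reflexivity-type arguments do require $p>1$.
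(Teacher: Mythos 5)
Your proposal is correct, but note that the paper itself offers no proof of this proposition: it is imported verbatim as a citation of \cite[Corollary 4.14]{F}, so the comparison is with Folland's argument rather than with anything in the text. Your route is genuinely different and more elementary. Since the paper fixes the stratification and lets $(X_1,\dots,X_m)$ range only over bases of the fixed layer $V_1$, the change of frame is a constant matrix $C\in GL(m,\mathbb{R})$, and your computation $Y_i^{*}=\sum_j c_{ij}X_j^{*}$, hence $Y_iu=\sum_j c_{ij}X_ju$ in the weak sense (an identity that in fact holds distributionally for any $u\in L^1_{\rm loc}$), together with the invertibility of $C$, settles both the set equality and the equivalence of norms; as you observe, this works for all $1\le p\le\infty$ and directly on the domain $\Omega$. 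Folland instead works globally on the group and derives the corollary from his identification of the Sobolev scale with potential spaces of the sub-Laplacian $\mathcal{L}=-\sum_j X_j^2$ (via $J_\alpha=(I+\mathcal{L})^{-\alpha/2}$), an operator that \emph{does} depend on the chosen basis, so comparing two choices rests on subelliptic and Calder\'on--Zygmund-type estimates: this is where $1<p<\infty$ genuinely enters, and it is needed because Folland's statement covers the whole scale of orders, where constancy of $C$ no longer trivializes matters --- not because of a Meyers--Serrin/reflexivity issue as you speculate (density of smooth functions in these spaces holds for all $1\le p<\infty$). Your approach buys elementarity, locality (Folland's semigroup machinery does not localize to $\Omega$ in any obvious way), and the full range of $p$; Folland's buys the entire Sobolev scale at once. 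Finally, the point you correctly identify as the crux deserves to be made sharper: if ``choice of $(X_1,\dots,X_m)$'' were read as allowing a basis of the first layer of a \emph{different} stratification of $\mathfrak{g}$, the statement on a fixed $\Omega$ would actually fail --- in the Heisenberg group with $[X,Y]=T$, the subspace $\mathrm{span}(X+T,Y)$ is a legitimate first layer of another stratification, yet $(X+T)u\in L^p$ does not follow from $Xu,Yu\in L^p$, since horizontal Sobolev functions need not satisfy $Tu\in L^p$ --- so your reading, a constant change of basis inside the fixed $V_1$, is the only one under which the proposition is true as stated, and your proof of it is complete.
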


\subsection{Monotone operators}
\begin{defi}\label{def}
Let $V$ be a reflexive Banach space, $V^*$ its dual space and let $\mathcal{A}:V\to V^{*}$ be a mapping. We say that 
\begin{itemize}
\item[$\cdot$] $\mathcal{A}$ is \textit{monotone}, if
\[\left\langle \mathcal{A}(u)-\mathcal{A}(v), u-v \right\rangle_{V^{*}\times V}\geq 0\quad\mbox{for all}\ u,v\in V;\]
\item[$\cdot$] $\mathcal{A}$ is \textit{strictly-monotone}, if it is monotone and
\[\left\langle \mathcal{A}(u)-\mathcal{A}(v), u-v \right\rangle_{V^{*}\times V}= 0\quad \mbox{implies}\ u=v;\]
\item[$\cdot$] $\mathcal{A}$ is \textit{coercive}, if there exists an element $v \in V$ such that
\[\frac{\left\langle \mathcal{A}(u)-\mathcal{A}(v), u-v \right\rangle_{V^{*}\times V}}{\|u-v\|_V}\to \infty \quad\mbox{as}\ \|u\|_V\to \infty\ \mbox{for any}\ u\in V;\]
\item[$\cdot$] $\mathcal{A}$ is \textit{continuous on finite dimensional subspaces} of $V$ if, for any finite dimensional subspace $M\subset V$, the restriction of $\mathcal{A}$ to $M$ is weakly continuous, namely, if $\mathcal{A}: M\to V^{*}$ is weakly continuous.
\end{itemize}
\end{defi}
\begin{oss}
Let us remark that the operators defined in \eqref{operator}, are strictly-monotone, by Definition \ref{M,alpha,beta}. 
\end{oss}
The following result will be crucial later on.
\begin{teo}\cite[Corollary 1.8, Chapter III]{KS}\label{thKS}
Let X be a Banach space, let $K$ be a closed, nonempty and convex subset of $X$ and let $A:K\to X^{*}$ be monotone, coercive and continuous on finite dimensional subspaces of $K$. Then, there exists $u\in K$ such that
\[
\left\langle A(u), v-u\right\rangle_{X^*\times K}\geq 0\quad \mbox{for any}\ v\in K.
\]
\end{teo}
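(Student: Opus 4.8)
The plan is to prove this by the classical Galerkin--Browder--Minty scheme: solve finite-dimensional restrictions via Brouwer's theorem, obtain a uniform a~priori bound from coercivity, and pass to the limit by monotonicity. I would work under the reflexivity assumption on $X$ (the only place compactness enters, and satisfied in every application here, since $W^{1,p}_{\mathbb{G},0}(\Omega)$ is reflexive for $1<p<\infty$), and I would assume, as is natural, that the coercivity reference point $v$ lies in $K$. The first ingredient is the finite-dimensional variational inequality lemma: if $C\subset\mathbb{R}^d$ is compact and convex and $f:C\to\mathbb{R}^d$ is continuous, then there is $u\in C$ with $\langle f(u),w-u\rangle\ge 0$ for all $w\in C$; this follows from Brouwer's fixed point theorem applied to the continuous self-map $w\mapsto P_C(w-f(w))$ of $C$, together with the variational characterization of the projection $P_C$. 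I would then take the family of finite-dimensional subspaces $M\subset X$ with $v\in M$, directed by inclusion, and for each such $M$ and each $R>\|v\|_X$ apply the lemma on the compact convex set $K\cap M\cap\overline{B}_R$, using that $A$ is continuous on $M$; this produces $u_{M,R}$ solving the restricted inequality.

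Testing the restricted inequality with $w=v\in K\cap M$ gives $\langle A(u_{M,R}),u_{M,R}-v\rangle\le 0$, so that
\[
\frac{\langle A(u_{M,R})-A(v),\,u_{M,R}-v\rangle}{\|u_{M,R}-v\|_X}\le \|A(v)\|_{X^*}.
\]
By coercivity the left-hand side tends to $+\infty$ as $\|u_{M,R}\|_X\to\infty$, hence there is $R_0$, independent of $M$ and $R$, with $\|u_{M,R}\|_X\le R_0$. Choosing a fixed $R>R_0$ forces the solution strictly inside the ball, so the ball constraint is inactive and $u_M:=u_{M,R}$ solves $\langle A(u_M),w-u_M\rangle\ge 0$ for every $w\in K\cap M$, with the uniform bound $\|u_M\|_X\le R_0$.

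Since the net $(u_M)$ is bounded and $X$ is reflexive, I would extract a subnet with $u_M\rightharpoonup u$; as $K$ is closed and convex, it is weakly closed, so $u\in K$. Fixing $w\in K$ and a subspace $M_0\ni v,w$, for every $M\supseteq M_0$ one has $w\in K\cap M$, whence by monotonicity $\langle A(w),w-u_M\rangle\ge\langle A(u_M),w-u_M\rangle\ge 0$; passing to the weak limit yields $\langle A(w),w-u\rangle\ge 0$ for all $w\in K$. To recover the stated (primal) form I would invoke Minty's trick: given $v'\in K$, insert $w=(1-t)u+tv'\in K$ with $t\in(0,1]$, divide by $t>0$, and let $t\to 0^+$; since $A$ is weakly continuous on the finite-dimensional segment through $u$ and $v'$, this gives $\langle A(u),v'-u\rangle\ge 0$, which is exactly the desired conclusion.

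The main obstacle is the coercivity step: $K$ may be unbounded, while Brouwer's theorem requires a compact domain, and one must guarantee that the artificial ball constraint eventually becomes inactive, uniformly over the directed family, so that a single weak limit $u$ serves simultaneously for all test elements $w$. This is precisely the role of the coercivity hypothesis. The finite-dimensional existence and the limit passage are then standard, the latter relying only on monotonicity and on the continuity of $A$ on finite-dimensional subspaces.
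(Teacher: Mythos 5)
The paper offers no proof of this statement at all: it is imported verbatim (up to a transcription slip) from Kinderlehrer--Stampacchia \cite{KS}, and your argument is precisely the classical proof given there --- the Hartman--Stampacchia finite-dimensional lemma via Brouwer's fixed point theorem and the projection map, the coercivity estimate obtained by testing with $v$ that makes the ball truncation inactive uniformly (this is the content of Theorem 1.7 and Corollary 1.8 of Chapter III in \cite{KS}), and Minty's trick to pass from the dual inequality $\langle A(w), w-u\rangle \geq 0$ back to the primal one. Your proof is correct, and you were right on the two points where care is needed: reflexivity of $X$ must be restored (it is assumed in \cite{KS} but dropped in the paper's statement, and without it the weak-compactness extraction of $u$ from the bounded Galerkin net fails in a general Banach space --- harmless here, since the paper only applies the theorem with $X$ reflexive), and the coercivity reference point $v$ should be taken in $K$, as in \cite{KS}, for the test $w=v$ in the truncated inequality to be admissible.
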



\section{Existence results for equations driven by monotone operators}\label{sect3}
From now on, let $\Omega\subset\mathbb{G}$ be open, connected and bounded, let $2\leq p<\infty$, $V=W^{1,p}_{\mathbb{G},0}(\Omega)$ and let $V^{*}=W^{-1,p'}_{\mathbb{G}}(\Omega)$. Moreover, let $\mathcal{A}:V\to V^{*}$ be defined as in \eqref{operator}.

As a corollary of the following result, $\mathcal{A}$ is continuous and invertible in $V$.
\begin{prop}\label{existence}
Let $A\in\mathcal{M}(\alpha,\beta;\Omega)$. Then, for every $f\in V^{*}$ there exists a unique (weak) solution $u\in V$ of 
\begin{equation}\label{1}
-\mathrm{div}_{\mathbb{G}}(A(\cdot,\nabla_{\mathbb{G}}u))=f\qquad \mbox{in}\ \Omega,
\end{equation}
i.e.,
\begin{equation}\label{111}
\int_{\Omega}\langle A(x,\nabla_{\mathbb{G}}u),\nabla_{\mathbb{G}} \varphi\rangle\, dx=\int_{\Omega} f\varphi\, dx\quad\forall\varphi\in C^{\infty}_c(\Omega).
\end{equation}
\end{prop}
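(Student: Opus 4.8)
The plan is to realize the weak solution as the solution of a variational inequality and then invoke Theorem \ref{thKS}. Take $V=W^{1,p}_{\mathbb{G},0}(\Omega)$, which is a reflexive Banach space since $2\le p<\infty$, and let $K=V$, a closed, nonempty, convex subset of itself. The operator $\mathcal{A}:V\to V^*$ is defined by $\langle\mathcal{A}(u),\varphi\rangle=\int_\Omega\langle A(x,\nabla_{\mathbb{G}}u),\nabla_{\mathbb{G}}\varphi\rangle\,dx$; the first step is to check that this is well defined, i.e.\ that $\mathcal{A}(u)\in V^*$ for each $u\in V$. This follows from condition $(iii)$ with $\eta=0$ together with $(i)$, which give the growth bound $|A(x,\xi)|\le\beta[1+|\xi|^p]^{(p-2)/p}|\xi|$, so that $A(\cdot,\nabla_{\mathbb{G}}u)\in L^{p'}(\Omega,H\mathbb{G})$ by Hölder with conjugate exponents $p$ and $p'$, whence the functional is bounded on $V$.

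Next I would verify the three hypotheses of Theorem \ref{thKS} for the operator $u\mapsto\mathcal{A}(u)-f$ (which is legitimate since adding a fixed element of $V^*$ preserves monotonicity, coercivity and continuity). \emph{Monotonicity} is immediate from $(ii)$: for $u,v\in V$,
\[
\langle\mathcal{A}(u)-\mathcal{A}(v),u-v\rangle=\int_\Omega\langle A(x,\nabla_{\mathbb{G}}u)-A(x,\nabla_{\mathbb{G}}v),\nabla_{\mathbb{G}}u-\nabla_{\mathbb{G}}v\rangle\,dx\ge\alpha\int_\Omega|\nabla_{\mathbb{G}}u-\nabla_{\mathbb{G}}v|^p\,dx\ge0,
\]
and this simultaneously yields strict monotonicity, since equality forces $\nabla_{\mathbb{G}}u=\nabla_{\mathbb{G}}v$ a.e., hence $u=v$ by the Poincaré inequality of Definition \ref{Definition 1.1.1} (this is exactly the uniqueness part). \emph{Coercivity} I would get by taking $v=0$: using $(i)$ and $(ii)$ with $\eta=0$ gives $\langle\mathcal{A}(u),u\rangle\ge\alpha\|u\|_V^p$, and since $p\ge2$ the quotient $\alpha\|u\|_V^{p-1}\to\infty$ as $\|u\|_V\to\infty$. \emph{Continuity on finite dimensional subspaces} is the point requiring the Carathéodory structure: on a finite dimensional $M$, strong convergence $u_j\to u$ in $M$ forces (after passing to a subsequence) $\nabla_{\mathbb{G}}u_j\to\nabla_{\mathbb{G}}u$ pointwise a.e., so $A(x,\nabla_{\mathbb{G}}u_j)\to A(x,\nabla_{\mathbb{G}}u)$ a.e.\ by the Carathéodory continuity in the second variable, and the growth bound together with the Lipschitz-type estimate $(iii)$ provides a dominating function, so that $A(\cdot,\nabla_{\mathbb{G}}u_j)\to A(\cdot,\nabla_{\mathbb{G}}u)$ in $L^{p'}$ by dominated convergence, giving weak (indeed strong) continuity of $\mathcal{A}$ into $V^*$.

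With these three properties established, Theorem \ref{thKS} applied to $A:=\mathcal{A}-f$ on $K=V$ produces $u\in V$ with $\langle\mathcal{A}(u)-f,v-u\rangle\ge0$ for all $v\in V$. Because $K=V$ is a linear space, replacing $v$ by $u\pm w$ for arbitrary $w\in V$ turns the inequality into the equality $\langle\mathcal{A}(u)-f,w\rangle=0$ for every $w\in V$, which is precisely the weak formulation \eqref{111} once one notes that $C^\infty_c(\Omega)\cap W^{1,p}_{\mathbb{G}}(\Omega)$ is dense in $V$. Uniqueness follows from strict monotonicity: if $u_1,u_2$ both solve \eqref{1}, subtracting the weak formulations and testing against $w=u_1-u_2$ gives $\langle\mathcal{A}(u_1)-\mathcal{A}(u_2),u_1-u_2\rangle=0$, hence $u_1=u_2$ as above. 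The step I expect to be the main obstacle is verifying continuity on finite dimensional subspaces carefully, since it is the only place where the full strength of the Carathéodory hypothesis and the growth estimate $(iii)$ must be combined to justify the passage to the limit inside the integral; the monotonicity and coercivity estimates are direct consequences of $(i)$ and $(ii)$.
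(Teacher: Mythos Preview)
Your proof is correct and follows essentially the same route as the paper: define the shifted operator $\mathcal{A}-f$, verify monotonicity and coercivity directly from $(ii)$, check continuity on finite-dimensional subspaces, apply Theorem~\ref{thKS} with $K=V$, and linearize the variational inequality via $v=u\pm w$. The only cosmetic difference is that the paper handles continuity by a direct H\"older estimate showing $\|A(\cdot,\nabla_{\mathbb{G}}u_n)-A(\cdot,\nabla_{\mathbb{G}}u)\|_{L^{p'}}\le\beta\bigl[|\Omega|+\|u_n\|_V^p+\|u\|_V^p\bigr]^{(p-2)/p}\|u_n-u\|_V$ on all of $V$, whereas you pass through pointwise-a.e.\ subsequences and dominated convergence; the former is slightly cleaner but both arguments are valid.
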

\begin{oss}\label{approx}
By standard approximation arguments, (\ref{111}) holds for every $\varphi\in V$.
\end{oss}
\begin{proof}
Let $f\in V^*$ and let $\Phi:V\to V^{*}$ be defined as
\begin{align*}
\langle\Phi(u),v\rangle_{V^*\times V}:=\int_{\Omega}\left[\langle A(x,\nabla_{\mathbb{G}}u),\nabla_{\mathbb{G}} v\rangle-fv\right]\, dx\quad\forall u,v\in V.
\end{align*}
Let us show that $\Phi$ is strictly-monotone, coercive and continuous on finite dimensional subspaces of $V$.
\begin{oss}\label{weak-strong}
Since the weak convergence in finite dimensional Banach spaces is equivalent to the strong one, it is enough to prove the strongly continuity of $\Phi$, in the whole space $V$.
\end{oss}
For any $u,v\in V$, we have
\begin{align*}
\left\langle \Phi(u)-\Phi(v), u-v\right\rangle_{V^{*}\times V}&=\int_{\Omega}\langle A(x,\nabla_{\mathbb{G}} u)-A(x,\nabla_{\mathbb{G}}v),\nabla_{\mathbb{G}} (u-v)\rangle\, dx\\
&\geq \alpha\int_{\Omega} |\nabla_{\mathbb{G}} u-\nabla_{\mathbb{G}} v|^p\, dx=\alpha\|u-v\|_V^p\geq 0
\end{align*}
and
\begin{align*}
\frac{\left\langle \Phi(u)-\Phi(v), u-v \right\rangle_{V^{*}\times V}}{\|u-v\|_V}\geq\alpha\|u-v\|_V^{p-1}.
\end{align*}
Let now $(u_n)_n\subset V$ and $u\in V$ be such that $u_n\to u$ in $V$. By the H\"older inequality, we have
\begin{align*}
\left\langle \Phi(u_n)-\Phi(u), u_n-u\right\rangle_{V^{*}\times V}&=\int_{\Omega}\langle A(x,\nabla_{\mathbb{G}} u_n)-A(x,\nabla_{\mathbb{G}}u),\nabla_{\mathbb{G}}(u_n-u)\rangle\, dx\\
&\leq \|A(\cdot,\nabla_{\mathbb{G}} u_n)-A(\cdot,\nabla_{\mathbb{G}}u)\|_{L^{p'}(\Omega,H\mathbb{G})}\|u_n-u\|_V.
\end{align*}
Moreover, $A(\cdot,\nabla_{\mathbb{G}} u_n)\to A(\cdot,\nabla_{\mathbb{G}}u)$ in $L^{p'}(\Omega,H\mathbb{G})$, since
\begin{align*}
\|A(\cdot,&\nabla_{\mathbb{G}} u_n)-A(\cdot,\nabla_{\mathbb{G}}u)\|_{L^{p'}(\Omega,H\mathbb{G})}^{p'}=\int_{\Omega}|A(x,\nabla_{\mathbb{G}} u_n)-A(x,\nabla_{\mathbb{G}}u)|^{p'}\, dx\\&\leq\beta^{p'}\int_{\Omega}\left[1+|\nabla_{\mathbb{G}}u_n|^p+|\nabla_{\mathbb{G}}u|^p\right]^\frac{p-2}{p-1}|\nabla_{\mathbb{G}} u_n-\nabla_{\mathbb{G}} u|^{p'}\, dx\\&\leq\beta^{p'}\left(\int_{\Omega}\left[1+|\nabla_{\mathbb{G}}u_n|^p+|\nabla_{\mathbb{G}}u|^p\right]\, dx\right)^\frac{p-2}{p-1}\left(\int_{\Omega}|\nabla_{\mathbb{G}} u_n-\nabla_{\mathbb{G}} u|^{p}\, dx\right)^\frac{p'}{p}\\&=\beta^{p'}\left[|\Omega|+\|u_n\|_V^p+\|u\|_V^p\right]^{\frac{p-2}{p}p'}\|u_n-u\|_V^{p'}.
\end{align*}

Therefore, by Theorem \ref{thKS}, there exists $u\in V$ such that 
\begin{align*}
\langle\Phi(u),v-u\rangle_{V^*\times V}\geq 0\quad \forall v\in V.
\end{align*}
Thus, for $v_1:=u+\varphi$ and $v_2:=u-\varphi$, we get
\begin{align*}
\langle\Phi(u),\varphi\rangle_{V^*\times V}=0\quad \forall\varphi\in V.
\end{align*}

Let now $u,v\in V$ be weak solutions of (\ref{1}). By Remark \ref{approx}
\begin{align*}
\int_{\Omega}\langle A(x,\nabla_{\mathbb{G}} u)-A(x,\nabla_{\mathbb{G}}v),\nabla_{\mathbb{G}}\varphi\rangle\, dx=0\quad\forall\varphi\in V.
\end{align*}
Therefore, for $\varphi=u-v$, we get the uniqueness of the solutions, since
\begin{align*}
0&=\int_{\Omega}\langle A(x,\nabla_{\mathbb{G}} u)-A(x,\nabla_{\mathbb{G}}v),\nabla_{\mathbb{G}}u-\nabla_{\mathbb{G}}v\rangle\, dx\\&\geq \alpha\int_{\Omega}|\nabla_{\mathbb{G}}(u-v)|^p\, dx=\alpha\|u-v\|_V^p\geq 0.
\end{align*}
\end{proof}
\begin{prop}\label{estimates}
Let $A\in\mathcal{M}(\alpha,\beta;\Omega)$, let $\mathcal{A}$ be defined as in $(\ref{operator})$ and let $\mathcal{A}^{-1}$ be its inverse operator. Then, the following three estimates hold:
\begin{itemize}
	\item [($a$)] $\langle\mathcal{A}(u)-\mathcal{A}(v),u-v\rangle_{V^*\times V}\geq\alpha\|u-v\|_V^p$;
	\item [($b$)] $\|\mathcal{A}^{-1}(f)-\mathcal{A}^{-1}(g)\|_V^p\leq\left(\frac{1}{\alpha}\right)^{p'}\|f-g\|_{V^*}^{p'}$;
	\item [($c$)] $\|\mathcal{A}(u)-\mathcal{A}(v)\|_{V^*}\leq\beta\left[|\Omega|+\|u\|_V^p+\|v\|_V^p\right]^{\frac{p-2}{p}}\|u-v\|_V$,
\end{itemize}
for all $u,v\in V$, for all $f,g\in V^*$.
\end{prop}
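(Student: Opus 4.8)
The plan is to read off all three estimates directly from the structural conditions $(ii)$ and $(iii)$ of Definition \ref{M,alpha,beta}, using throughout the norm identity $\|w\|_V^p=\int_\Omega|\nabla_{\mathbb{G}}w|^p\,dx$ valid on $V=W^{1,p}_{\mathbb{G},0}(\Omega)$ (Poincar\'e inequality) and the fact, recorded in Remark \ref{approx}, that the weak formulation may be tested against any element of $V$. First I would prove $(a)$: testing the pairing against $u-v\in V$ gives $\langle\mathcal{A}(u)-\mathcal{A}(v),u-v\rangle_{V^*\times V}=\int_\Omega\langle A(x,\nabla_{\mathbb{G}}u)-A(x,\nabla_{\mathbb{G}}v),\nabla_{\mathbb{G}}(u-v)\rangle\,dx$, and applying $(ii)$ pointwise with $\xi=\nabla_{\mathbb{G}}u(x)$, $\eta=\nabla_{\mathbb{G}}v(x)$ and integrating yields $\alpha\int_\Omega|\nabla_{\mathbb{G}}(u-v)|^p\,dx=\alpha\|u-v\|_V^p$. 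This is exactly the monotonicity computation already carried out at the start of the proof of Proposition \ref{existence}.

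For $(c)$ I would estimate the dual norm by duality, writing $\|\mathcal{A}(u)-\mathcal{A}(v)\|_{V^*}=\sup_{\|\varphi\|_V\leq1}\langle\mathcal{A}(u)-\mathcal{A}(v),\varphi\rangle_{V^*\times V}$. Each pairing equals $\int_\Omega\langle A(x,\nabla_{\mathbb{G}}u)-A(x,\nabla_{\mathbb{G}}v),\nabla_{\mathbb{G}}\varphi\rangle\,dx$, to which I apply H\"older to split off $\|\nabla_{\mathbb{G}}\varphi\|_{L^p}=\|\varphi\|_V$. The remaining factor $\|A(\cdot,\nabla_{\mathbb{G}}u)-A(\cdot,\nabla_{\mathbb{G}}v)\|_{L^{p'}(\Omega,H\mathbb{G})}$ is controlled using $(iii)$ precisely as in Proposition \ref{existence}: raising $(iii)$ to the power $p'$ and applying H\"older with the conjugate exponents $\frac{p-1}{p-2}$ and $p-1$ produces $\beta[|\Omega|+\|u\|_V^p+\|v\|_V^p]^{\frac{p-2}{p}}\|u-v\|_V$. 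Taking the supremum over $\|\varphi\|_V\leq1$ then gives $(c)$.

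Finally, $(b)$ follows by feeding $(a)$ into the duality pairing. Given $f,g\in V^*$, set $u=\mathcal{A}^{-1}(f)$ and $v=\mathcal{A}^{-1}(g)$, which are well defined by Proposition \ref{existence}, so that $\mathcal{A}(u)=f$ and $\mathcal{A}(v)=g$. Then $(a)$ and the definition of the dual norm give $\alpha\|u-v\|_V^p\leq\langle f-g,u-v\rangle_{V^*\times V}\leq\|f-g\|_{V^*}\|u-v\|_V$. Dividing by $\|u-v\|_V$ (the case $u=v$ being trivial) yields $\alpha\|u-v\|_V^{p-1}\leq\|f-g\|_{V^*}$, and raising both sides to the power $p'$, using the identity $(p-1)p'=p$, produces exactly $(b)$.

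The hard part here is not conceptual but bookkeeping: the only points requiring care are the choice of conjugate exponents $\frac{p-1}{p-2}$ and $p-1$ in the H\"older step for $(c)$ and the identity $(p-1)p'=p$ used to close $(b)$, both of which rest on the hypothesis $p\geq2$ (ensuring $\frac{p-2}{p}\geq0$ and that the intermediate exponents are admissible). Everything else is a direct transcription of conditions $(ii)$ and $(iii)$, so I expect no genuine obstacle.
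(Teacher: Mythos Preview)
Your proposal is correct and follows essentially the same route as the paper: $(a)$ is the pointwise monotonicity $(ii)$ integrated, $(b)$ is obtained from $(a)$ plus the duality inequality with $u=\mathcal{A}^{-1}(f)$, $v=\mathcal{A}^{-1}(g)$, and $(c)$ comes from the $L^{p'}$ bound on $A(\cdot,\nabla_{\mathbb{G}}u)-A(\cdot,\nabla_{\mathbb{G}}v)$ established via $(iii)$ and H\"older exactly as in the proof of Proposition~\ref{existence}. If anything, your treatment of $(c)$ via $\sup_{\|\varphi\|_V\le1}$ is slightly more explicit than the paper's, which states the $L^{p'}$ estimate and leaves the passage to the $V^*$-norm implicit.
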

\begin{proof}
Let $u,v\in V$ and let $f,g\in V^*$ be such that $\mathcal{A}(u)=f$ and $\mathcal{A}(v)=g$ in $\Omega$. 

$(a)$ follows from the definition of $\mathcal{M}(\alpha,\beta;\Omega)$, since
\begin{align*}
\langle\mathcal{A}(u)-\mathcal{A}(v),u-v\rangle_{V^*\times V}&=\int_{\Omega}\langle A(x,\nabla_{\mathbb{G}}u)-A(x,\nabla_{\mathbb{G}}v),\nabla_{\mathbb{G}}u-\nabla_{\mathbb{G}}v\rangle\, dx\\
&\geq\alpha\int_{\Omega}|\nabla_\mathbb{G}u-\nabla_\mathbb{G}v|^p\, dx=\alpha\|u-v\|_V^p.
\end{align*}
Moreover, recalling that
\begin{align*}
\left\langle \mathcal{A}(u)-\mathcal{A}(v), u-v\right\rangle_{V^{*}\times V}\leq\|\mathcal{A}(u)-\mathcal{A}(v)\|_{V^*}\|u-v\|_V\quad\forall u,v\in V,
\end{align*}
and applying $(a)$, for $u=\mathcal{A}^{-1}(f)$ and $v=\mathcal{A}^{-1}(g)$, we get
\begin{align*}
\alpha\|\mathcal{A}^{-1}(f)-\mathcal{A}^{-1}(g)\|_V^p\leq\|f-g\|_{V^*}\|\mathcal{A}^{-1}(f)-\mathcal{A}^{-1}(g)\|_V,
\end{align*}
which implies $(b)$.
Finally, $(c)$ holds, since
\begin{align*}
\|A(\cdot,\nabla_{\mathbb{G}}u)-A(\cdot,\nabla_{\mathbb{G}}v)\|_{L^{p'}(\Omega,H\mathbb{G})}\leq\beta\left[|\Omega|+\|u\|_V^p+\|v\|_V^p\right]^\frac{p-2}{p}\|u-v\|_V,
\end{align*}
i.e.,
\begin{align*}
\langle\mathcal{A}(u)-\mathcal{A}(v),u-v\rangle_{V^*\times V}&=\int_{\Omega}\langle A(x,\nabla_{\mathbb{G}}u)-A(x,\nabla_{\mathbb{G}}v),\nabla_{\mathbb{G}}u-\nabla_{\mathbb{G}}v\rangle\, dx\\
&\leq\|A(\cdot,\nabla_{\mathbb{G}}u)-A(\cdot,\nabla_{\mathbb{G}}v)\|_{L^{p'}(\Omega,H\mathbb{G})}\|u-v\|_V\\
&\leq\beta\left[|\Omega|+\|u\|_V^p+\|v\|_V^p\right]^{\frac{p-2}{p}}\|u-v\|_V^2.
\end{align*}
\end{proof}


\section{H-convergence and Div-curl lemma}\label{sect4}

Let us recall the notion of \textit{$H$-convergence}. The following statement is the natural adaptation of the original definition of Murat and Tartar to our context.
\begin{defi}\label{Hconv}
Let $(A^n)_n\subset\mathcal{M}(\alpha,\beta;\Omega)$ and let $A^{\text{eff}}\in\mathcal{M}(\alpha',\beta';\Omega)$ for some $\alpha\leq\beta$ and $\alpha'\leq\beta'$ positive constants. We say that $(A^n)_n$ $H$-converges to $A^\text{eff}$ if for every $f\in W^{-1,p'}_{\mathbb{G}}(\Omega)$, given $(u_n)_n\subset W^{1,p}_{\mathbb{G},0}(\Omega)$ the sequence of weak solutions of $-\mathrm{div}_{\mathbb{G}}(A^n(\cdot,\nabla_{\mathbb{G}}u_n))=f$ in $\Omega$ and $u_\infty\in W^{1,p}_{\mathbb{G},0}(\Omega)$ the weak solution of $-\mathrm{div}_{\mathbb{G}}(A^\text{eff}(\cdot,\nabla_{\mathbb{G}}u_\infty))=f$ in $\Omega$, then the following convergences hold:
\begin{itemize}
	\item[$\cdot$] $u_n\rightharpoonup u_\infty$ in $W^{1,p}_{\mathbb{G},0}(\Omega)$-weak;
	\item[$\cdot$] $A^n(\cdot,\nabla_{\mathbb{G}}u_n)\rightharpoonup A^\text{eff}(\cdot,\nabla_{\mathbb{G}}u_\infty)$ in $L^{p'}(\Omega,H\mathbb{G})$-weak.
\end{itemize}
\end{defi}
\medskip

We divide the proof of Theorem \ref{Xth} in several steps. At first, let us show the convergence of the solutions.
\begin{lem}\label{lemma1}
Let $(A^n)_n\subset\mathcal{M}(\alpha,\beta;\Omega)$ and let us consider the sequence of monotone operators $\mathcal{A}_n:W^{1,p}_{\mathbb{G},0}(\Omega)\to W^{-1,p'}_{\mathbb{G}}(\Omega)$, defined as
\begin{align*}
\mathcal{A}_n(u):=-\mathrm{div}_{\mathbb{G}}(A^n(\cdot,\nabla_{\mathbb{G}}u))\ \text{in}\ \Omega.
\end{align*}
Then, there exist a continuous and invertible operator $\mathcal{A}_\infty:W^{1,p}_{\mathbb{G},0}(\Omega)\rightarrow W^{-1,p'}_{\mathbb{G}}(\Omega)$ and a subsequence $(\mathcal{A}_m)_m$ of $(\mathcal{A}_n)_n$, such that
\begin{align*}
    \mathcal{A}_m^{-1}(f)\rightharpoonup\mathcal{A}_\infty^{-1}(f)\quad\text{in }W^{1,p}_{\mathbb{G},0}(\Omega)\text{-weak}
\end{align*}
for every $f\in W^{-1,p'}_{\mathbb{G}}(\Omega)$.
\end{lem}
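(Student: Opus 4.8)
The plan is to work with the solution operators $T_n:=\mathcal{A}_n^{-1}\colon V^{*}\to V$ and to realise $\mathcal{A}_\infty$ as the inverse of a limit operator $T_\infty$ obtained by a diagonal extraction. First I would record the two uniform bounds that drive everything. Since $A^n(x,0)=0$ gives $\mathcal{A}_n(0)=0$, hence $T_n(0)=0$, Proposition~\ref{estimates}$(b)$ with $g=0$ yields $\|T_n(f)\|_V\le \alpha^{-1/(p-1)}\|f\|_{V^{*}}^{1/(p-1)}$, so $(T_n(f))_n$ is bounded in $V$ for every fixed $f$, uniformly in $n$; moreover $(b)$ shows that the family $(T_n)_n$ is \emph{equi-Hölder-continuous} with exponent $1/(p-1)$. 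Because $V=W^{1,p}_{\mathbb{G},0}(\Omega)$ is reflexive and separable, bounded sequences in $V$ admit weakly convergent subsequences; fixing a countable dense set $D\subset V^{*}$ and diagonalising over $f\in D$, I would extract a subsequence $(\mathcal{A}_m)_m$ along which $T_m(f)$ converges weakly in $V$ for every $f\in D$.

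Next I would define $T_\infty$ on $D$ as these weak limits. By weak lower semicontinuity of the norm, $T_\infty$ inherits from $(b)$ the same Hölder modulus on $D$, hence extends uniquely to a continuous map $T_\infty\colon V^{*}\to V$. A standard three-term estimate, splitting $\langle\Lambda,T_m(f)-T_\infty(f)\rangle$ through a nearby $g\in D$ and controlling the two outer terms by the equicontinuity and the middle one by the convergence on $D$, then upgrades the convergence to $T_m(f)\rightharpoonup T_\infty(f)$ for \emph{every} $f\in V^{*}$.

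The heart of the argument is to show that $T_\infty$ is a bijection, so that $\mathcal{A}_\infty:=T_\infty^{-1}$ is a well-defined continuous invertible operator. Passing to the weak limit in $\langle f-g,T_m(f)-T_m(g)\rangle\ge\alpha\|T_m(f)-T_m(g)\|_V^{p}$ (from $(a)$) gives, by weak lower semicontinuity, $\langle f-g,T_\infty(f)-T_\infty(g)\rangle\ge\alpha\|T_\infty(f)-T_\infty(g)\|_V^{p}$, i.e. $T_\infty$, viewed as an operator $V^{*}\to(V^{*})^{*}=V$, is monotone. For \emph{injectivity}, if $T_\infty(f)=T_\infty(g)$ then $T_m(f)-T_m(g)\rightharpoonup 0$, so $\langle f-g,T_m(f)-T_m(g)\rangle\to 0$ forces $\|T_m(f)-T_m(g)\|_V\to 0$ \emph{strongly}; feeding this into $(c)$, whose bracket stays bounded by the uniform estimate, yields $\|f-g\|_{V^{*}}=0$. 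For \emph{surjectivity} I would apply Theorem~\ref{thKS} to $T_\infty$ on $K=V^{*}$, and the main subtlety is precisely its coercivity, since one must avoid passing to the limit in products of weakly convergent sequences.

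I would secure the coercivity \emph{before} taking the limit. For fixed $f$ and $u_m:=T_m(f)$, estimate $(c)$ with $v=0$ gives $\|f\|_{V^{*}}\le \beta\,[\,|\Omega|+\|u_m\|_V^{p}\,]^{(p-2)/p}\|u_m\|_V$, a bound uniform in $m$ that forces $\|u_m\|_V\ge\psi^{-1}(\|f\|_{V^{*}})$ with $\psi(t)=\beta[|\Omega|+t^{p}]^{(p-2)/p}t$; combined with $\langle f,u_m\rangle\ge\alpha\|u_m\|_V^{p}$ from $(a)$, this produces the \emph{$m$-independent} lower bound $\langle f,u_m\rangle\ge\alpha\,\psi^{-1}(\|f\|_{V^{*}})^{p}$, which survives the weak limit to give $\langle f,T_\infty(f)\rangle\ge\alpha\,\psi^{-1}(\|f\|_{V^{*}})^{p}$ and hence $\langle f,T_\infty(f)\rangle/\|f\|_{V^{*}}\to\infty$ as $\|f\|_{V^{*}}\to\infty$. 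With monotonicity, coercivity and the norm-to-weak continuity of $T_\infty$ on finite dimensional subspaces of $V^{*}$ in hand, Theorem~\ref{thKS} applied to $f\mapsto T_\infty(f)-w$ produces, for each $w\in V$, some $f$ with $T_\infty(f)=w$. Thus $T_\infty$ is bijective, $\mathcal{A}_\infty:=T_\infty^{-1}$ is continuous and invertible, and by construction $\mathcal{A}_m^{-1}(f)=T_m(f)\rightharpoonup T_\infty(f)=\mathcal{A}_\infty^{-1}(f)$ for every $f$. The delicate point throughout is to read off each limiting inequality from bounds that are \emph{uniform in $m$}, so that monotonicity and coercivity transfer to $T_\infty$ despite the merely weak convergence of the solutions.
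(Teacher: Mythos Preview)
Your proof is correct and follows the same strategy as the paper: extract a diagonal subsequence so that $\mathcal{A}_m^{-1}$ converges weakly on a countable dense set, extend by equicontinuity, then verify monotonicity and coercivity of the limit operator $S=T_\infty$ and invoke Theorem~\ref{thKS} to obtain $\mathcal{A}_\infty:=S^{-1}$. The only minor tactical differences are that the paper bundles injectivity and coercivity into the single limiting inequality $\|f-g\|_{V^*}^p \le C(f,g)\,\langle S(f)-S(g),f-g\rangle_{V\times V^*}$ (obtained by combining Proposition~\ref{estimates}$(a)$ and $(c)$ before passing to the limit), whereas you treat them separately via $\psi^{-1}$ and a strong-convergence argument, and you are more explicit than the paper about the three-term estimate upgrading the weak convergence from $D$ to all of $V^*$.
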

\begin{proof}
For the sake of simplicity, let us denote by $V=W^{1,p}_{\mathbb{G},0}(\Omega)$ and by $V^*$ be its dual space. We divide the proof of the Lemma in three steps.
\medskip

$[${\it Step 1}$]$ At first, let us show that the sequence of solutions of 
\begin{equation}\label{11}
\mathcal{A}_n(u)=f\ \text{in }\Omega\ \text{for all }n\in\mathbb{N},
\end{equation}
up to subsequences, weakly converges in $V$ for any fixed $f\in X$, where $X$ is a fixed countable and dense subspace of $V^*$. Moreover, we estimate an upper-bound for its limit, in terms of $f$.
\medskip

Let $f\in V^*$ and let $(u_n)_n\subset V$ be the sequence of solutions of (\ref{11}). By Proposition \ref{existence}, $u_n=\mathcal{A}_n^{-1}(f)$ for any $n\in\mathbb{N}$ and, by Proposition \ref{estimates},
\begin{align*}
    \|u_n\|_V\leq\left(\frac{1}{\alpha}\right)^\frac{1}{p-1}\|f\|_{V^*}^\frac{1}{p-1},
\end{align*}
i.e., the sequence $(u_n)_n$ is bounded in V, reflexive Banach space. In terms of diagonal arguments, let $X$ be a countable and dense subset of $V^*$. Then, for any $f\in X$, there exist $u_\infty(f)\in V$ and $(u_m)_m$, diagonal subsequence of $(u_n)_n$, such that
$$u_m\rightharpoonup u_\infty(f)$$
in $V$-weak. Moreover, by the lower semicontinuity of the norm and by Proposition \ref{estimates}, we have
\begin{align*}
\langle f,u_\infty\rangle_{V^*\times V}&=\lim_{m\rightarrow\infty}\langle f,u_m\rangle_{V^*\times V}=\lim_{m\rightarrow\infty}\langle\mathcal{A}_m(u_m),u_m\rangle_{V^*\times V}\\&\geq\alpha\lim_{m\rightarrow\infty}\|u_m\|_V^p\geq\alpha\liminf_{m\rightarrow\infty}\|u_m\|_V^p\geq\alpha\|u_\infty\|_V^p
\end{align*}	
and, since
\begin{align*}
\langle f,u_\infty\rangle_{V^*\times V}\leq\|f\|_{V^*}\|u_\infty\|_V,
\end{align*}
we finally get
\begin{align*}\label{intfy}
\|u_\infty\|_V\leq\left(\frac{1}{\alpha}\right)^\frac{1}{p-1}\|f\|_{V^*}^\frac{1}{p-1}.
\end{align*}
\medskip

$[${\it Step 2}$]$ Defined the operator
\begin{align*}
S:X&\rightarrow V\\f&\mapsto S(f):=\lim_{m\rightarrow\infty}\mathcal{A}_m^{-1}(f),
\end{align*}
let us show that $S$ can be extended to the whole space $V^*$. Since $X$ is a countable and dense subspace of $V^*$, it is enough to show that $S$ is continuous in $(X,\|\cdot\|_{V^*})$.
\medskip

Let $f,g\in X$. Then, by Proposition \ref{estimates}, we have
\begin{align*}
\|\mathcal{A}_m^{-1}(f)-\mathcal{A}_m^{-1}(g)\|_V\leq\left(\frac{1}{\alpha}\right)^\frac{1}{p-1}\|f-g\|_{V^*}^\frac{1}{p-1}\quad\forall m\in\mathbb{N}.
\end{align*}
Therefore, passing to the limit, we get, by the lower semicontinuity of the norm
\begin{align*}
\|S(f)-S(g)\|_V\leq\liminf_{m\rightarrow\infty}\|\mathcal{A}_m^{-1}(f)-\mathcal{A}_m^{-1}(g)\|_V\leq\left(\frac{1}{\alpha}\right)^\frac{1}{p-1}\|f-g\|_{V^*}^\frac{1}{p-1}.
\end{align*}
For the sake of completeness, the extension of $S$ to $V^*\setminus X$ is defined as $$S(f):=\lim_{n\to\infty}S(f_n)$$
where $f\in V^*$, $(f_n)_n\subset X$ are such that $f_n\to f$ in $V^*$.
\medskip

$[${\it Step 3}$]$ Let us finally show the invertibility of $S$ in $V^*$. In order to apply Theorem \ref{thKS}, let us show that $S$ is monotone and coercive in $V^*$.
\medskip

Let $f,g\in V^*$. Thus
\begin{align*}
\left\langle S(f)-S(g),f-g\right\rangle_{V\times V^{*}}&=\lim_{m\rightarrow\infty}\left\langle \mathcal{A}_m^{-1}(f)-\mathcal{A}_m^{-1}(g),f-g\right\rangle_{V\times V^{*}}\\&=\lim_{m\rightarrow\infty}\left\langle \mathcal{A}_m(u_m)-\mathcal{A}_m(v_m),u_m-v_m\right\rangle_{V^*\times V}\\&\geq\alpha\lim_{m\rightarrow\infty}\|u_m-v_m\|_V^p\geq 0.
\end{align*}
Moreover, by Proposition \ref{estimates}
\begin{align*}
\|\mathcal{A}_m(u_m)-\mathcal{A}_m(v_m)\|_{V^*}^p
&\leq\beta^p\left[|\Omega|+\|u_m\|_V^p+\|v_m\|_V^p\right]^{p-2}\|u_m-v_m\|_V^p\\
&\leq\frac{\beta^p}{\alpha}\left[|\Omega|+\|u_m\|_V^p+\|v_m\|_V^p\right]^{p-2}\\
&\ \ \ \ \langle\mathcal{A}_m(u_m)-\mathcal{A}_m(v_m),u_m-v_m\rangle_{V^*\times V}\\
&\leq\frac{\beta^p}{\alpha}\left[|\Omega|+\left(\frac{1}{\alpha}\right)^{p'}\|f\|_{V^*}^{p'}+\left(\frac{1}{\alpha}\right)^{p'}\|g\|_{V^*}^{p'}\right]^{p-2}\\
&\ \ \ \ \langle\mathcal{A}^{-1}_m(f)-\mathcal{A}^{-1}_m(g),f-g\rangle_{V\times V^*}
\end{align*}
and, passing to the limit
\begin{align*}
\|f-g\|_{V^*}^p&\leq\frac{\beta^p}{\alpha}\left[|\Omega|+\left(\frac{1}{\alpha}\right)^{p'}\|f\|_{V^*}^{p'}+\left(\frac{1}{\alpha}\right)^{p'}\|g\|_{V^*}^{p'}\right]^{p-2}\\&\ \ \ \ \langle S(f)-S(g),f-g\rangle_{V\times V^*}.
\end{align*}
We get the thesis, defining
$$\mathcal{A}_\infty:=S^{-1}:V\rightarrow V^*.$$
\end{proof}
We continue by proving the convergence of the momenta.
\begin{lem}\label{lemma2}
Let $(\mathcal{A}_n)_n$ be as in the previous Lemma. Then, for every $f\in W^{-1,p'}_{\mathbb{G}}(\Omega)$, there exists a continuous operator $M:W^{-1,p'}_{\mathbb{G}}(\Omega)\rightarrow L^{p'}(\Omega,H\mathbb{G})$ such that, up to subsequences, $A^n(\cdot,\nabla_\mathbb{G}\mathcal{A}_n^{-1}(f))\rightharpoonup M(f)$ in $L^{p'}(\Omega,H\mathbb{G})$-weak.
\end{lem}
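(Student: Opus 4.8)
The plan is to mirror the three-step structure of Lemma~\ref{lemma1}, now following the momentum maps $f\mapsto A^m(\cdot,\nabla_\mathbb{G}\mathcal{A}_m^{-1}(f))$ and working in the reflexive space $L^{p'}(\Omega,H\mathbb{G})$ in place of $V$. Throughout, $V=W^{1,p}_{\mathbb{G},0}(\Omega)$ and $(m)$ denotes the subsequence already fixed in Lemma~\ref{lemma1}.

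First I would derive a uniform bound on the momenta. Fix $f\in V^*$ and set $u_m:=\mathcal{A}_m^{-1}(f)$. Combining $(i)$ and $(iii)$ of Definition~\ref{M,alpha,beta} with $\eta=0$ gives the pointwise growth estimate $|A^m(x,\xi)|\leq\beta[1+|\xi|^p]^{\frac{p-2}{p}}|\xi|$; raising this to the power $p'$ and applying the H\"older inequality with exponents $\frac{p-1}{p-2}$ and $p-1$, exactly as in the proof of Proposition~\ref{existence}, yields
\[
\|A^m(\cdot,\nabla_\mathbb{G}u_m)\|_{L^{p'}(\Omega,H\mathbb{G})}\leq\beta\left[|\Omega|+\|u_m\|_V^p\right]^{\frac{p-2}{p}}\|u_m\|_V.
\]
Since $\|u_m\|_V\leq(1/\alpha)^{1/(p-1)}\|f\|_{V^*}^{1/(p-1)}$ by Proposition~\ref{estimates}, the momenta are bounded in $L^{p'}(\Omega,H\mathbb{G})$ uniformly in $m$, with a bound depending only on $\|f\|_{V^*}$, $\alpha$, $\beta$, $p$ and $|\Omega|$. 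As $p'\in(1,2]$, the space $L^{p'}(\Omega,H\mathbb{G})$ is reflexive, so a diagonal extraction over the countable dense set $X\subset V^*$ of Lemma~\ref{lemma1} produces a single further subsequence, still denoted $(m)$, together with elements $M(f)\in L^{p'}(\Omega,H\mathbb{G})$ for $f\in X$ such that $A^m(\cdot,\nabla_\mathbb{G}\mathcal{A}_m^{-1}(f))\rightharpoonup M(f)$ in $L^{p'}(\Omega,H\mathbb{G})$-weak.

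Next I would prove a modulus-of-continuity estimate for $M$ on $X$, uniform in $m$, so that it extends to $V^*$. For $f,g\in X$, writing $u_m=\mathcal{A}_m^{-1}(f)$ and $v_m=\mathcal{A}_m^{-1}(g)$, the same computation underlying estimate $(c)$ of Proposition~\ref{estimates} gives
\[
\|A^m(\cdot,\nabla_\mathbb{G}u_m)-A^m(\cdot,\nabla_\mathbb{G}v_m)\|_{L^{p'}(\Omega,H\mathbb{G})}\leq\beta\left[|\Omega|+\|u_m\|_V^p+\|v_m\|_V^p\right]^{\frac{p-2}{p}}\|u_m-v_m\|_V.
\]
Inserting the a priori bounds on $\|u_m\|_V$ and $\|v_m\|_V$ together with the contraction estimate $\|u_m-v_m\|_V\leq(1/\alpha)^{1/(p-1)}\|f-g\|_{V^*}^{1/(p-1)}$ from Proposition~\ref{estimates}, the right-hand side is bounded by $C(\|f\|_{V^*},\|g\|_{V^*})\,\|f-g\|_{V^*}^{1/(p-1)}$, uniformly in $m$. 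Passing to the weak limit and using the weak lower semicontinuity of the $L^{p'}$-norm, the same bound holds for $\|M(f)-M(g)\|_{L^{p'}(\Omega,H\mathbb{G})}$; hence $M$ is uniformly continuous on bounded subsets of $(X,\|\cdot\|_{V^*})$ and extends uniquely to a continuous operator $M:V^*\to L^{p'}(\Omega,H\mathbb{G})$.

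The step I expect to be the main obstacle is upgrading the weak convergence from $f\in X$ to an arbitrary $f\in V^*$, since a pointwise-in-$f$ weak limit need not persist without uniform control in $m$. Here the uniform modulus of continuity established above is decisive. Given $f\in V^*$ and a test section $\varphi\in L^p(\Omega,H\mathbb{G})$, I would pick $f_k\in X$ with $f_k\to f$ in $V^*$ and split
\[
\int_\Omega\langle A^m(\cdot,\nabla_\mathbb{G}\mathcal{A}_m^{-1}(f))-M(f),\varphi\rangle\, dx
\]
through $f_k$ into three contributions: the first is controlled uniformly in $m$ by the H\"older modulus times $\|\varphi\|_{L^p(\Omega,H\mathbb{G})}$, the second vanishes as $m\to\infty$ for fixed $k$ by the definition of $M$ on $X$, and the third is small by continuity of $M$. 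A standard $\varepsilon/3$ argument then yields $A^m(\cdot,\nabla_\mathbb{G}\mathcal{A}_m^{-1}(f))\rightharpoonup M(f)$ in $L^{p'}(\Omega,H\mathbb{G})$-weak for every $f\in V^*$, which completes the proof.
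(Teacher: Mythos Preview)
Your proof is correct and follows essentially the same route as the paper: bound the momenta in $L^{p'}(\Omega,H\mathbb{G})$ via the growth condition and H\"older, extract a diagonal subsequence over a countable dense set in $V^*$, and then use the Lipschitz-type estimate stemming from Proposition~\ref{estimates}$(c)$ together with weak lower semicontinuity of the norm to extend $M$ continuously to all of $V^*$. Your explicit $\varepsilon/3$ argument showing that the weak convergence itself (not just the definition of $M$) extends from $X$ to arbitrary $f\in V^*$ is actually more complete than the paper's version, which stops once the continuous extension of $M$ is obtained.
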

\begin{proof}
In terms of diagonal arguments, let $X$ be a countable and dense subspace of $L^{p'}(\Omega,H\mathbb{G})$ and let $f\in X$. Repeating the same techniques of the previous lemma, let us show the existence of a diagonal subsequence of $(A^n(\cdot,\nabla_{\mathbb{G}}\mathcal{A}_n^{-1}(f)))_n$, weakly convergent in $L^{p'}(\Omega,H\mathbb{G})$.
\medskip

Since $(A^n)_n\subset\mathcal{M}(\alpha,\beta;\Omega)$, by the H\"older inequality we get
\begin{align*}
\int_{\Omega}|A^n(x,\nabla_{\mathbb{G}}\mathcal{A}_n^{-1}(f))|^{p'}\, dx&\leq\beta^{p'}\int_{\Omega}\left[1+|\nabla_{\mathbb{G}}\mathcal{A}_n^{-1}(f)|^p\right]^\frac{p-2}{p-1}|\nabla_{\mathbb{G}}\mathcal{A}_n^{-1}(f)|^{p'}\, dx\\
&\leq\beta^{p'}\left(\int_{\Omega}\left[1+|\nabla_{\mathbb{G}}\mathcal{A}_n^{-1}(f)|^p\right]\, dx\right)^\frac{p-2}{p-1}\\
&\quad\left(\int_{\Omega}|\nabla_{\mathbb{G}}\mathcal{A}_n^{-1}(f)|^{p}\, dx\right)^\frac{p'}{p}\\
&=\beta^{p'}\left[|\Omega|+\|\mathcal{A}_n^{-1}(f)\|_V^p\right]^{\frac{p-2}{p}p'}\|\mathcal{A}_n^{-1}(f)\|_V^{p'},
\end{align*}
i.e.,
\begin{align*}
\|A^n(\cdot,\nabla_{\mathbb{G}}\mathcal{A}_n^{-1}(f))\|_{L^{p'}(\Omega,H\mathbb{G})}\leq\beta\left[|\Omega|+\|\mathcal{A}_n^{-1}(f)\|_V^p\right]^\frac{p-2}{p}\|\mathcal{A}_n^{-1}(f)\|_V.
\end{align*}
Therefore, by Proposition \ref{estimates}, we have
\begin{align*}
\|A^n(\cdot,\nabla_{\mathbb{G}}\mathcal{A}_n^{-1}(f))\|_{L^{p'}(\Omega,H\mathbb{G})}\leq\frac{\beta}{\alpha^\frac{1}{p-1}}\left[|\Omega|+\left(\frac{1}{\alpha}\right)^{p'}\|f\|_{V^*}^{p'}\right]^\frac{p-2}{p}\|f\|_{V^*}^\frac{1}{p-1},
\end{align*}
that is, the sequence $(A^n(\cdot,\nabla_{\mathbb{G}}\mathcal{A}_n^{-1}(f)))_n$ is bounded in $L^{p'}(\Omega,H\mathbb{G})$.

By the countability of $X$, there exists a diagonal subsequence of $(A^n(\cdot,\nabla_{\mathbb{G}}\mathcal{A}_n^{-1}(f)))_n$, weakly convergent to $M=M(f)$ in $L^{p'}(\Omega,H\mathbb{G})$.

Let us now define the operator
\begin{align*}
M:X&\rightarrow L^{p'}(\Omega,H\mathbb{G})\\f&\mapsto M(f):=\lim_{m\rightarrow\infty}A^m(\cdot,\nabla_{\mathbb{G}}\mathcal{A}_m^{-1}(f)).
\end{align*}
In order to conclude the proof, we need to extend $M$ to the whole space $V^*$. We just need a continuous condition on $M$.

Let $f,g\in X$. By Proposition \ref{estimates}, we have
\begin{align*}
\|A^m(\cdot,&\nabla_{\mathbb{G}}\mathcal{A}_m^{-1}(f))-A^m(\cdot,\nabla_{\mathbb{G}}\mathcal{A}_m^{-1}(g))\|_{L^{p'}(\Omega,H\mathbb{G})}\\&\leq\frac{\beta}{\alpha^\frac{1}{p-1}}\left[|\Omega|+\left(\frac{1}{\alpha}\right)^{p'}\|f\|_{V^*}^{p'}+\left(\frac{1}{\alpha}\right)^{p'}\|g\|_{V^*}^{p'}\right]^\frac{p-2}{p}\|f-g\|_{V^*}^\frac{1}{p-1}.
\end{align*}
Then, by the lower semicontinuity of the norm, we finally have
\begin{align*}
\|M(f)&-M(g)\|_{L^{p'}(\Omega,H\mathbb{G})}\\&\leq\frac{\beta}{\alpha^\frac{1}{p-1}}\left[|\Omega|+\left(\frac{1}{\alpha}\right)^{p'}\|f\|_{V^*}^{p'}+\left(\frac{1}{\alpha}\right)^{p'}\|g\|_{V^*}^{p'}\right]^\frac{p-2}{p}\|f-g\|_{V^*}^\frac{1}{p-1},
\end{align*}
which concludes the proof.
\end{proof}
\medskip

In order to conclude the proof of Theorem \ref{Xth}, we need to recall a classic tool: the {\it Div-curl Lemma}. The following statement, in the context of Carnot groups, was given by Baldi, Franchi, Tchou and Tesi in \cite{BFTT}. The definition of \textit{intrinsic curl}, $\mathrm{curl}_{\mathbb{G}}$, can be found in \cite[Section 5]{BFTT}.
\begin{teo}\cite[Theorem 5.1]{BFTT}\label{divcurl}
	Let $\Omega\subset\mathbb{G}$ be an open set and let $p,q>1$ a H\"older conjugate pair. Moreover, following the notations of \cite{BFTT}, if $\sigma\in\mathcal{I}^2_0$, let $a(\sigma)>1$ and $b>1$ be such that
$$a(\sigma)>\frac{Qp}{Q+(\sigma-1)p}\ \text{and}\ b>\frac{Qq}{Q+q}.$$
Let now $(E^n)_n\subset L^p_{loc}(\Omega,H\mathbb{G})$ and $(D^n)_n\subset L^q_{loc}(\Omega,H\mathbb{G})$ be sequences of horizontal vector fields for $n\in\mathbb{N}$ weakly convergent to $E\in L^p_{loc}(\Omega,H\mathbb{G})$ and $D\in L^q_{loc}(\Omega,H\mathbb{G})$, respectively, and such that:
\begin{itemize}
	\item[$\cdot$] the components of $(\mathrm{curl}_{\mathbb{G}}E^n)_n$ of weight $\sigma$ are bounded in $L^{a(\sigma)}_{loc}(\Omega,H\mathbb{G})$;
	\item[$\cdot$] $(\mathrm{div}_\mathbb{G}D^n)_n$ is bounded in $L^b_{loc}(\Omega,H\mathbb{G})$.
\end{itemize}
Then
\begin{align*}
\langle D^n,E^n\rangle\to\langle D,E\rangle\ \text{in}\ \mathcal{D}'(\Omega),
\end{align*}
i.e.,
\begin{align*}
\int_{\Omega}\langle D^n(x),E^n(x)\rangle\varphi(x)\, dx\to\int_{\Omega}\langle D(x),E(x)\rangle\varphi(x)\, dx
\end{align*}
for any $\varphi\in\mathcal{D}(\Omega)$.
\end{teo}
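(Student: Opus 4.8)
The plan is to transpose the compensated-compactness mechanism of Murat and Tartar to the intrinsic differential complex of the Carnot group, replacing the Euclidean Helmholtz decomposition by a Hodge-type decomposition built from $\nabla_\mathbb{G}$, $\mathrm{curl}_\mathbb{G}$ and $\mathrm{div}_\mathbb{G}$. Because the assertion is a convergence in $\mathcal{D}'(\Omega)$, I would first fix $\varphi\in\mathcal{D}(\Omega)$ and a cutoff $\zeta\in C_c^\infty(\Omega)$ with $\zeta\equiv 1$ on a neighborhood of $\mathrm{supp}\,\varphi$, and work with $\zeta E^n$, $\zeta D^n$. This localization turns the $L^p_{loc}$, $L^q_{loc}$ bounds and the curl/div bounds into genuine bounds on a fixed relatively compact piece $\Omega'\Subset\Omega$, so that the sequences may be treated as bounded in $L^p(\Omega')$ and $L^q(\Omega')$ with $\mathrm{curl}_\mathbb{G}E^n$ controlled componentwise and $\mathrm{div}_\mathbb{G}D^n$ bounded in $L^b$. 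The goal becomes $\int_\Omega\langle D^n,E^n\rangle\varphi\,dx\to\int_\Omega\langle D,E\rangle\varphi\,dx$.

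The core step is to split the curl-controlled field into an exact part and a controlled remainder. Using the fundamental solution of the sub-Laplacian $\Delta_\mathbb{G}=\mathrm{div}_\mathbb{G}\nabla_\mathbb{G}$ (Folland), I would solve $\Delta_\mathbb{G}w^n=\mathrm{div}_\mathbb{G}(\zeta E^n)$ and set
\begin{align*}
\zeta E^n=\nabla_\mathbb{G}w^n+G^n,\qquad \mathrm{div}_\mathbb{G}G^n=0,\quad \mathrm{curl}_\mathbb{G}G^n=\mathrm{curl}_\mathbb{G}(\zeta E^n),
\end{align*}
the last identity being the complex property $\mathrm{curl}_\mathbb{G}\circ\nabla_\mathbb{G}=0$. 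Here $\nabla_\mathbb{G}w^n$ is bounded in $L^p(\Omega')$, so $w^n$ is bounded in $W^{1,p}_\mathbb{G}(\Omega')$, while $G^n$ is divergence-free with the same curl as $\zeta E^n$; the subelliptic Hodge/regularity estimates attached to the complex of \cite{BFTT} then place $G^n$ in a horizontal Sobolev class whose order reflects the weight $\sigma$ of each curl component. The two exponent thresholds are precisely what make these pieces compact in the spaces needed below.

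I would then pass to the limit by the weak-times-strong principle. By the subelliptic Rellich--Kondrachov theorem on $\Omega'$, up to a subsequence $w^n\to w$ strongly in $L^r(\Omega')$ for every $r<p^*:=Qp/(Q-p)$, and $G^n\to G$ strongly in $L^p(\Omega')$; for the latter, $G^n$ bounded in the weight-$\sigma$ Sobolev class embeds compactly into $L^p$, the strict inequality $a(\sigma)>Qp/(Q+(\sigma-1)p)$ furnishing the room, with the factor $(\sigma-1)$ encoding the orders of regularity gained on the higher layers of the stratification. Splitting the pairing as
\begin{align*}
\int_\Omega\langle D^n,\zeta E^n\rangle\varphi\,dx=\int_\Omega\langle D^n,\nabla_\mathbb{G}w^n\rangle\varphi\,dx+\int_\Omega\langle D^n,G^n\rangle\varphi\,dx,
\end{align*}
I would integrate by parts in the first term,
\begin{align*}
\int_\Omega\langle D^n,\nabla_\mathbb{G}w^n\rangle\varphi\,dx=-\int_\Omega w^n\big(\varphi\,\mathrm{div}_\mathbb{G}D^n+\langle\nabla_\mathbb{G}\varphi,D^n\rangle\big)\,dx,
\end{align*}
so that it becomes a pairing of the strongly convergent $w^n$ against $\mathrm{div}_\mathbb{G}D^n\in L^b$ and the weakly convergent $D^n$; since $b>Qq/(Q+q)$ gives $1/p^*+1/b<1$ strictly, one may choose $r<p^*$ with $1/r+1/b\le 1$, and the product converges. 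In the second term, $D^n\rightharpoonup D$ in $L^q$ is paired with the strongly convergent $G^n\in L^p=L^{q'}$, which again converges. Summing the two limits reconstitutes $\int_\Omega\langle D,E\rangle\varphi\,dx$.

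The principal obstacle is the graded nature of $\mathrm{curl}_\mathbb{G}$: unlike the Euclidean curl it is not a single first-order operator, and its components carry different weights $\sigma\in\mathcal{I}^2_0$ obeying different subelliptic estimates, which is exactly why the admissible exponent $a(\sigma)$ depends on $\sigma$. Building the Hodge-type decomposition so that the regularity recovered on each layer matches the threshold $Qp/(Q+(\sigma-1)p)$, and establishing the attendant compact embedding uniformly in $n$, is the technical heart of the argument and the place where the intrinsic complex of \cite{BFTT} is indispensable; once the decomposition and the compactness are in hand, the final weak-times-strong passage is routine.
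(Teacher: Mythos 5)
You should first note what the paper actually does here: Theorem \ref{divcurl} is imported verbatim from \cite[Theorem 5.1]{BFTT} and the paper contains no proof of it, so there is no internal argument to compare with; the benchmark is the original proof of Baldi--Franchi--Tchou--Tesi. Measured against that, your architecture is the right one and essentially the same in spirit: localize, split the curl-controlled field via a Hodge-type decomposition attached to the intrinsic (Rumin) complex, gain compactness from subelliptic a priori estimates in Folland--Stein spaces, and conclude by the weak-times-strong pairing. Your exponent bookkeeping is also correct: $1/p^*+1/b<1$ follows from $b>Qq/(Q+q)$, the dual exponent $b'$ sits below $p^*$, and the threshold $a(\sigma)>Qp/(Q+(\sigma-1)p)$ is exactly the condition for a field gaining $\sigma-1$ horizontal derivatives in $L^{a(\sigma)}_{loc}$ to embed compactly into $L^p_{loc}$. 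The use of $\mathrm{curl}_\mathbb{G}\circ\nabla_\mathbb{G}=0$ is legitimate, since in the Rumin complex $d_c^2=0$.

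There are, however, two genuine gaps, both located precisely where you wave at ``the technical heart''. First, the localization step fails as stated in groups of step $\geq 3$: the components of $\mathrm{curl}_\mathbb{G}$ of weight $\sigma$ are differential operators of order $\sigma-1$ in the horizontal derivatives, so the Leibniz expansion of $\mathrm{curl}_\mathbb{G}(\zeta E^n)$ produces commutator terms involving intermediate horizontal derivatives of $E^n$ of orders $1,\dots,\sigma-2$ multiplied by derivatives of $\zeta$; these are controlled neither by the $L^p_{loc}$ bound on $E^n$ nor by the $L^{a(\sigma)}_{loc}$ bound on $\mathrm{curl}_\mathbb{G}E^n$, so a cutoff alone does not convert the local hypotheses into global ones. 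Handling this requires the smoothing and interpolation machinery of \cite{BFTT}, not just multiplication by $\zeta$. Second, the strong $L^p$ convergence of $G^n$ rests on an unproved Gaffney-type estimate: that a divergence-free horizontal field whose weight-$\sigma$ curl components lie in $L^{a(\sigma)}_{loc}$ belongs locally to a Folland--Stein class gaining $\sigma-1$ horizontal derivatives. This is the central analytic content of the theorem; it requires the hypoellipticity of Rumin's Laplacian, which has \emph{mixed} orders on the various weight components of the complex, together with parametrix estimates for it --- none of which follows from the scalar sub-Laplacian and Folland's fundamental solution that you invoke for $w^n$. Since you assume rather than establish both the localized curl bound and this compact-embedding lemma, the proposal is a correct skeleton of the BFTT proof but not a proof: everything that distinguishes the Carnot case from the Euclidean div-curl lemma is concentrated in exactly the two steps left open.
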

\begin{proof}[\bf Proof of Theorem~\ref{Xth}]
Given the operators $\mathcal{A}_\infty$ and $M$, defined in the proofs of Lemma \ref{lemma1} and Lemma \ref{lemma2}, let us consider the following composition
$$C:=M\circ\mathcal{A}_\infty:W^{1,p}_{\mathbb{G},0}(\Omega)\rightarrow L^{p'}(\Omega,H\mathbb{G}),$$
and let us prove the existence of $A^{\text{eff}}\in\mathcal{M}(\alpha,\beta;\Omega)$, such that
\begin{align*}
C(u)=A^{\text{eff}}(x,\nabla_\mathbb{G}\mathcal{A}_\infty^{-1}(f))
\end{align*}
for every $f\in W^{-1,p'}_\mathbb{G}(\Omega)$ and for all $u\in W^{1,p}_{\mathbb{G},0}(\Omega)$ such that $\mathcal{A}_\infty(u)=f$ a.e. $x\in\Omega$.
\medskip

For any fixed $f\in W^{-1,p'}_\mathbb{G}(\Omega)$, given $\omega$ an open set such that $\overline{\omega}\subset\Omega$ and given $v\in W^{1,p}_{\mathbb{G},0}(\Omega)$, a solution of $-\mathrm{div}_{\mathbb{G}}(A(\cdot,\nabla_{\mathbb{G}}v))=f$ in $\Omega$, we define the Carath\'eodory function $A^\text{eff}: \omega\times\mathbb{R}^{m}\to \mathbb{R}^{m}$ as
$$A^\text{eff}(x,\xi):=C(v),$$
if $\nabla_\mathbb{G}v(x)=\xi$ a.e. $x$ in $\omega$.
\medskip

In order to show that this definition makes sense, we need to verify that $A^\text{eff}(x,\xi_1)=A^\text{eff}(x,\xi_2)$ a.e. $x$ in $\omega_1\cap\omega_2$, for any $\xi_1,\xi_2\in\mathbb{R}^{m}$ such that $\xi_1=\xi_2$ and for any $\omega_1,\omega_2$ open sets such that $\overline{\omega_1},\overline{\omega_2}\subset\Omega$.

For $i=1,2$, let us consider the following spaces, functions and sequences:
\begin{itemize}
	\item[$(a)$] $\omega_i$ open sets such that $\overline{\omega_i}\subset\Omega$;
	\item[$(b)$] $\varphi_i\in C^1_c(\Omega)$ such that $\varphi_i|_{\omega_i}=1$;
	\item[$(c)$] $\xi_i\in\mathbb{R}^{m}$;
	\item[$(d)$] $(v_{i,n})_n\subset W^{1,p}_{\mathbb{G},0}(\Omega)$ weakly convergent, up to subsequences, in $W^{1,p}_{\mathbb{G},0}(\Omega)$ to $$v_{i,\infty}(x)=\varphi_i(x)\langle\xi_i,\pi(x)\rangle$$
	where $\pi(x)=(x_1,..,x_{m})$ for every $x=(x_1,..,x_{n})\in\Omega$;
	\item[$(e)$] $f_i\in W^{-1,p'}_\mathbb{G}(\Omega)$ such that $f_i=-\mathrm{div}_\mathbb{G}C(v_{i,\infty})$;
	\item[$(f)$] $(D_i^n)_n:=(A^n(\cdot,\nabla_{\mathbb{G}}v_{i,n}))_n\subset L^{p'}(\Omega,H\mathbb{G})$;
	\item[$(g)$] $(E_i^n)_n:=(\nabla_{\mathbb{G}}v_{i,n})_n\subset L^p(\Omega,H\mathbb{G})$.
\end{itemize}

By definition, $\nabla_\mathbb{G}v_{i,\infty}=\xi_i$ in $\omega_i$, for $i=1,2$. Moreover, in terms of diagonal arguments, there exist $(D_i^m)_m$ and $(E_i^m)_m$, diagonal subsequences of $(D_i^n)_n$ and $(E_i^n)_n$, respectively, and there exist $D_i\in L^{p'}(\Omega,H\mathbb{G})$ and $E_i\in L^p(\Omega,H\mathbb{G})$ such that:
\begin{itemize}
	\item[$\cdot$] $D_i^m\rightharpoonup D_i$ in $L^{p'}(\Omega,H\mathbb{G})$-weak
	\item[$\cdot$] $E_i^m\rightharpoonup E_i$ in $L^p(\Omega,H\mathbb{G})$-weak.
\end{itemize}

Since $\mathrm{curl}_\mathbb{G}(E_i^m)=0$
for all $m$, $D_i(\cdot)=A^{\text{eff}}(\cdot,\xi_i)$ and $E_i=\xi_i$ in $\omega_i$ for $i=1,2$, the hypotheses of Theorem \ref{divcurl} are satisfied (taking as $a$ each value grater than $1$ that satisfies the hypotheses of the Lemma, and $b=p'$). Therefore
\begin{align*}
\int_{\Omega}\langle D_2^m-D_1^m,E_2^m-E_1^m\rangle\varphi\, dx\to\int_{\Omega}\langle D_2-D_1,E_2-E_1\rangle\varphi\, dx
\end{align*}
for any $\varphi\in\mathcal{D}(\omega_1\cap\omega_2)$, i.e.,
\begin{align*}
\int_{\Omega}\langle A^m(x,\nabla_{\mathbb{G}}v_{2,m})-A^m(x,\nabla_{\mathbb{G}}v_{1,m}),\nabla_{\mathbb{G}}v_{2,m}-\nabla_{\mathbb{G}}v_{1,m}\rangle\varphi(x)\, dx\\\to\int_{\Omega}\langle A^{\text{eff}}(x,\xi_2)-A^{\text{eff}}(x,\xi_1),\xi_2-\xi_1\rangle\varphi(x)\, dx
\end{align*}
for any $\varphi\in\mathcal{D}(\omega_1\cap\omega_2)$.

Let us assume $\varphi\geq 0$. Since $(A^m)_m\subset\mathcal{M}(\alpha,\beta;\Omega)$, then
\begin{align*}
&\int_{\Omega}\langle A^{\text{eff}}(x,\xi_2)-A^{\text{eff}}(x,\xi_1),\xi_2-\xi_1\rangle\varphi(x)\, dx\\
&\geq\liminf_{m\rightarrow\infty}\int_{\Omega}\langle A^m(x,\nabla_{\mathbb{G}}v_{2,m})-A^m(x,\nabla_{\mathbb{G}}v_{1,m}),\nabla_{\mathbb{G}}v_{2,m}-\nabla_{\mathbb{G}}v_{1,m}\rangle\varphi(x)\, dx\\
&\geq\liminf_{m\rightarrow\infty}\alpha\int_{\Omega}|\nabla_\mathbb{G}v_{2,m}-\nabla_\mathbb{G}v_{1,m}|^p\varphi(x)\, dx\\
&\geq\alpha\int_{\Omega}|\nabla_\mathbb{G}v_{2,\infty}-\nabla_\mathbb{G}v_{1,\infty}|^p\varphi(x)\, dx\\
&=\alpha\int_{\Omega}|\xi_2-\xi_1|^p\varphi(x)\, dx
\end{align*}
and
\begin{align*}
&\int_{\Omega}\langle A^{\text{eff}}(x,\xi_2)-A^{\text{eff}}(x,\xi_1),\xi_2-\xi_1\rangle\varphi(x)\, dx\\
&\geq\liminf_{m\rightarrow\infty}\alpha\int_{\Omega}|\nabla_\mathbb{G}v_{2,m}-\nabla_\mathbb{G}v_{1,m}|^p\varphi(x)\, dx\\
&\geq\liminf_{m\rightarrow\infty}\dfrac{\alpha}{\beta^p}\int_{\Omega}\left[1+|\nabla_\mathbb{G}v_{2,m}|^p+|\nabla_\mathbb{G}v_{1,m}|^p\right]^{2-p}|A^m(x,\nabla_\mathbb{G}v_{2,m})-A^m(x,\nabla_\mathbb{G}v_{1,m})|^p\varphi(x)\, dx\\
&\geq\dfrac{\alpha}{\beta^p}\int_{\Omega}\left[1+|\nabla_\mathbb{G}v_{2,\infty}|^p+|\nabla_\mathbb{G}v_{1,\infty}|^p\right]^{2-p}|A^\text{eff}(x,\nabla_\mathbb{G}v_{2,\infty})-A^\text{eff}(x,\nabla_\mathbb{G}v_{1,\infty})|^p\varphi(x)\, dx\\
&=\dfrac{\alpha}{\beta^p}\int_{\Omega}\left[1+|\xi_2|^p+|\xi_1|^p\right]^{2-p}|A^\text{eff}(x,\xi_2)-A^\text{eff}(x,\xi_1)|^p\varphi(x)\, dx.
\end{align*}
Restricting the integrals to $\omega_1\cap\omega_2$ and varying $\varphi$, we get
\begin{itemize}
	\item[$(a)$] $\langle A^{\text{eff}}(x,\xi_2)-A^{\text{eff}}(x,\xi_1),\xi_2-\xi_1\rangle\geq\alpha|\xi_2-\xi_1|^p$
	\item[$(b)$] $\langle A^{\text{eff}}(x,\xi_2)-A^{\text{eff}}(x,\xi_1),\xi_2-\xi_1\rangle\geq\dfrac{\alpha}{\beta^p}\left[1+|\xi_2|^p+|\xi_1|^p\right]^{2-p}\\|A^\text{eff}(x,\xi_2)-A^\text{eff}(x,\xi_1)|^p$
\end{itemize}
a.e. $x\in\omega_1\cap\omega_2$.

For $\xi_1=\xi_2$, we obtain $A^\text{eff}(x,\xi_1)=A^\text{eff}(x,\xi_2)$ a.e. $x\in\omega_1\cap\omega_2$, while, taking $\xi_1\neq\xi_2$, and recalling that $A^\text{eff}(\cdot,0)=0$ by definition, we deduce that
$$A^\text{eff}\in\mathcal{M}(\alpha,\beta;\Omega).$$
\begin{oss}\label{rem}
In order to prove condition $(iii)$ of Definition \ref{M,alpha,beta}, it is enough to observe that
\begin{align*}
&\int_{\Omega}|\xi_2-\xi_1|^p\varphi(x)\, dx\\
&\geq\liminf_{m\rightarrow\infty}\int_{\Omega}|\nabla_\mathbb{G}v_{2,m}-\nabla_\mathbb{G}v_{1,m}|^p\varphi(x)\, dx\\
&\geq\liminf_{m\rightarrow\infty}\dfrac{1}{\beta^p}\int_{\Omega}\left[1+|\nabla_\mathbb{G}v_{2,m}|^p+|\nabla_\mathbb{G}v_{1,m}|^p\right]^{2-p}|A^m(x,\nabla_\mathbb{G}v_{2,m})-A^m(x,\nabla_\mathbb{G}v_{1,m})|^p\varphi(x)\, dx\\
&\geq\dfrac{1}{\beta^p}\int_{\Omega}\left[1+|\nabla_\mathbb{G}v_{2,\infty}|^p+|\nabla_\mathbb{G}v_{1,\infty}|^p\right]^{2-p}|A^\text{eff}(x,\nabla_\mathbb{G}v_{2,\infty})-A^\text{eff}(x,\nabla_\mathbb{G}v_{1,\infty})|^p\varphi(x)\, dx\\
&=\dfrac{1}{\beta^p}\int_{\Omega}\left[1+|\xi_2|^p+|\xi_1|^p\right]^{2-p}|A^\text{eff}(x,\xi_2)-A^\text{eff}(x,\xi_1)|^p\varphi(x)\, dx.
\end{align*}
\end{oss}
\medskip

Let now $u_\infty\in W^{1,p}_{\mathbb{G},0}(\Omega)$ be the (unique) solution of $\mathcal{A}_\infty(u)=f$ in $\Omega$ and let $(u_m)_m\subset W^{1,p}_{\mathbb{G},0}(\Omega)$ be weakly convergent to $u_\infty$ in $W^{1,p}_{\mathbb{G},0}(\Omega)$. In order to conclude the proof of the theorem, we have to show that
$$C(u_\infty)=A^\text{eff}(x,\nabla_\mathbb{G}u_\infty)\quad\text{a.e. } x\in\Omega.$$
To achieve this purpose, we apply one more time Theorem \ref{divcurl}, for $D_2^m=A^m(x,\nabla_\mathbb{G}u_m)$ and $E_2^m=\nabla_\mathbb{G}u_m$.
\medskip

For every $\varphi\in\mathcal{D}(\omega_1)$ such that $\varphi\geq 0$, we have
\begin{align*}
\int_{\Omega}\langle A^m(x,\nabla_{\mathbb{G}}u_m)-A^m(x,\nabla_{\mathbb{G}}v_{1,m}),\nabla_{\mathbb{G}}u_m-\nabla_{\mathbb{G}}v_{1,m}\rangle\varphi(x)\, dx\\\to\int_{\Omega}\langle C(u_\infty)-A^{\text{eff}}(x,\xi_1),\nabla_\mathbb{G}u_\infty-\xi_1\rangle\varphi(x)\, dx.
\end{align*}
Following the techniques of the first part of the proof, we get
\begin{itemize}
	\item[$(a)$] $\langle C(u_\infty)-A^{\text{eff}}(x,\xi_1),\nabla_\mathbb{G}u_\infty-\xi_1\rangle\geq\alpha|\nabla_\mathbb{G}u_\infty-\xi_1|^p$
	\item[$(b)$] $\langle C(u_\infty)-A^{\text{eff}}(x,\xi_1),\nabla_\mathbb{G}u_\infty-\xi_1\rangle\geq\dfrac{\alpha}{\beta^p}\left[1+|\nabla_\mathbb{G}u_\infty|^p+|\xi_1|^p\right]^{2-p}\\|C(u_\infty)-A^\text{eff}(x,\xi_1)|^p$
\end{itemize}
and, in particular, that
\begin{align*}
\left|C(u_\infty)-A^\text{eff}(x,\xi_1)\right|\leq\beta\left[1+|\nabla_\mathbb{G}u_\infty|^p+|\xi_1|^p\right]^\frac{p-2}{p}\left|\nabla_\mathbb{G}u_\infty-\xi_1\right|
\end{align*}
a.e. $x\in\omega_1$ (see Remark \ref{rem} for details). Varying $\omega_1$ and $\xi_1\in\mathbb{R}^{m}$, we get the thesis.
\end{proof}

\section*{Acknowledgements}
The author would like to thank Prof. Francesco Serra Cassano and Prof. Andrea Pinamonti, of the Department of Mathematics of the University of Trento, for their support and help.


\end{document}